\theoremstyle{plain}
\newtheorem{theorem}{Theorem}[section]
\newtheorem{lemma}[theorem]{Lemma}
\newtheorem{proposition}[theorem]{Proposition}
\newtheorem{conjecture}[theorem]{Conjecture}
\theoremstyle{definition}
\newtheorem{remark}[theorem]{Remark}
\newtheorem{question}[theorem]{Question}
\newcommand{\IC}{\mathbb{C}}
\newcommand{\IE}{\mathbb{E}}
\newcommand{\IF}{\mathbb{F}}
\newcommand{\IH}{\mathbb{H}}
\newcommand{\IL}{\mathbb{L}}
\newcommand{\IP}{\mathbb{P}}
\newcommand{\IQ}{\mathbb{Q}}
\newcommand{\IR}{\mathbb{R}}
\newcommand{\IS}{\mathbb{S}}
\newcommand{\IZ}{\mathbb{Z}}
\DeclareMathOperator{\Ric}{Ric}
\title[The signature of geometrically decomposable aspherical $4$-manifolds]{The signature of geometrically decomposable aspherical $4$-manifolds}
\author{Luca F. Di Cerbo}
\email{ldicerbo@ufl.edu}
\address{Department of Mathematics, University of Florida, Gainesville, United States}
\author{Marco Golla}
\email{marco.golla@univ-nantes.fr}
\address{CNRS, Laboratoire Jean Leray, Nantes University, Nantes, France}
\date{}
\begin{document}
	
	\maketitle
	
	\begin{abstract}
		We construct examples of geometrically decomposable aspherical $4$-manifolds with non-zero signature. We show that all such $4$-manifolds satisfy the inequality (of Bogomolov--Miyaoka--Yau type)
		\[
		\chi\geq 3|\sigma|.
		\]
		We also construct examples attaining the equality that are non-geometric and have non-zero signature. Finally, we prove that for higher graph $4$-manifolds, with complex-hyperbolic vertices, the strict inequality always holds. Moreover, we construct infinitely many examples of higher graph $4$-manifolds with non-zero signature and prove that the inequality is strict and sharp in this class.
	\end{abstract}
		
	
		
	
\section{Introduction and statement of the main result}\label{intro}

A long-standing conjecture due to Singer, inspired by Atiyah's influential work~\cite{Atiyah}, states that the $L^2$-Betti numbers of a closed aspherical manifold vanish except possibly in the middle-dimensional degree. In dimension four, this general statement implies the following conjecture about the geography of aspherical 4-manifolds.

\begin{conjecture}\label{MainC}
If $M$ is a closed, oriented, aspherical $4$-manifold, then
\[
\chi(M)\geq |\sigma(M)|
\] 
where $\chi(M)$ and $\sigma(M)$ are respectively the Euler number and signature of $M$.
\end{conjecture}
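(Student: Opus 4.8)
The plan is to recognize Conjecture~\ref{MainC} as a direct consequence of the Singer conjecture on the vanishing of $L^2$-Betti numbers, and to reduce the geography inequality to that vanishing by means of $L^2$-index theory on the universal cover $\widetilde{M}$, on which $\Gamma=\pi_1(M)$ acts freely and cocompactly. Throughout, write $b_i^{(2)}(M)$ for the $i$-th $L^2$-Betti number, that is, the von Neumann dimension over $\mathcal{N}(\Gamma)$ of the space of $L^2$-harmonic $i$-forms on $\widetilde{M}$.

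The first step is to express both topological invariants as $\Gamma$-equivariant indices. Atiyah's $L^2$-index theorem applied to the de Rham complex yields the $L^2$-Gauss--Bonnet formula
\[
\chi(M)=\sum_{i=0}^{4}(-1)^i\,b_i^{(2)}(M),
\]
while applied to the signature operator it yields the $L^2$-signature formula
\[
\sigma(M)=b_2^{(2),+}(M)-b_2^{(2),-}(M),
\]
where $b_2^{(2),\pm}(M)$ denote the von Neumann dimensions of the self-dual and anti-self-dual $L^2$-harmonic $2$-forms, so that $b_2^{(2)}(M)=b_2^{(2),+}(M)+b_2^{(2),-}(M)$. The content here is Atiyah's theorem that these equivariant indices equal the ordinary $\chi$ and $\sigma$, with honest harmonic forms replaced by their $L^2$ counterparts.

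The second step is to invoke asphericity through the Singer conjecture: for a closed aspherical $4$-manifold the $L^2$-Betti numbers should vanish off the middle degree, $b_0^{(2)}(M)=b_1^{(2)}(M)=b_3^{(2)}(M)=b_4^{(2)}(M)=0$. Granting this, the Gauss--Bonnet formula collapses to $\chi(M)=b_2^{(2)}(M)\ge 0$, and then
\[
|\sigma(M)|=\bigl|b_2^{(2),+}(M)-b_2^{(2),-}(M)\bigr|\le b_2^{(2),+}(M)+b_2^{(2),-}(M)=b_2^{(2)}(M)=\chi(M),
\]
which is precisely the desired inequality.

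The main obstacle is that the Singer conjecture is itself open in dimension four: asphericity alone gives no handle on $\widetilde{M}$ forcing the off-middle $L^2$-harmonic forms to vanish. This is why the body of the paper does not attempt the general statement but restricts to \emph{geometrically decomposable} aspherical $4$-manifolds. For these one cuts $M$ along its incompressible tori (or cusp cross-sections) into pieces of understood geometry and analyzes the $L^2$-cohomology block by block, verifying the required vanishing on each geometric piece and controlling the toral gluings by a Mayer--Vietoris argument; I expect the delicate point to be the bookkeeping of the boundary contributions. In fact this strategy should deliver the sharper Bogomolov--Miyaoka--Yau-type bound $\chi\ge 3|\sigma|$ announced in the abstract, the constant $3$ being exactly the BMY constant $c_1^2\le 3c_2$ whose equality case is the complex-hyperbolic (ball-quotient) geometry appearing at the vertices.
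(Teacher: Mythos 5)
The statement you were asked about is Conjecture~\ref{MainC}: the paper states it as a conjecture and offers no proof of it. What you have written is the standard (and correct) derivation of the inequality $\chi(M)\ge|\sigma(M)|$ \emph{from} the Singer conjecture: Atiyah's $L^2$-index theorem gives $\chi(M)=\sum_i(-1)^ib_i^{(2)}(M)$ and $\sigma(M)=b_2^{(2),+}(M)-b_2^{(2),-}(M)$, and if the off-middle $L^2$-Betti numbers vanish then $\chi(M)=b_2^{(2)}(M)\ge b_2^{(2),+}(M)+b_2^{(2),-}(M)\ge|\sigma(M)|$. This is precisely the implication the introduction alludes to when it says the Singer conjecture ``implies the following conjecture,'' and your bookkeeping of the $\pm$-decomposition of harmonic $2$-forms is right. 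But, as you yourself acknowledge, the Singer conjecture is open in dimension four, so your argument is conditional and does not prove the statement; there is no unconditional proof in the paper either, nor is one claimed. The honest verdict is therefore: correct reduction, no proof, and no proof was expected.

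One substantive correction to your closing paragraph, where you guess how the paper handles the geometrically decomposable case: the paper does \emph{not} verify $L^2$-cohomology vanishing piece by piece nor run a Mayer--Vietoris argument on toral gluings. Instead it expresses $\chi(M)$ and $\sigma(M)$ as sums of curvature integrals over the geometric pieces (Gauss--Bonnet for finite-volume $4$-manifolds and the Atiyah--Patodi--Singer signature theorem for cuspidal metrics), observes that the $\eta$-invariant contributions of matched boundary components cancel in pairs, and then checks a pointwise curvature inequality $\Delta_\pm(M_\ell)\ge 0$ for each of the relevant geometries ($|W^+|=|W^-|$ for the product and real-hyperbolic geometries, Hirzebruch proportionality for $\IH^2(\IC)$, and an explicit computation for $\IF^4$). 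You are right, however, that the constant $3$ comes from the Bogomolov--Miyaoka--Yau line and that the complex-hyperbolic pieces govern the equality case.
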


We refer to L\"uck's paper~\cite{Luck2} for more details concerning Conjecture~\ref{MainC}, and also for its connections with the Singer--Hopf problem and related ideas and problems discussed by Gromov~\cite[Section 8.]{Gro93}.

In this paper, we prove a strengthened and sharp version of Conjecture~\ref{MainC} for \textit{geometrically decomposable} aspherical $4$-manifolds. Recall that there are 19 geometries in dimension $4$~\cite{Filipkiewicz}. In the book \cite{Hil02}, Hillman extensively studied  the analogue of JSJ decompositions of $3$-manifolds in the context of $4$-manifolds. Contrarily to the case of 3-manifolds, not all $4$-manifolds decompose into geometric pieces. A geometrically decomposable $4$-manifolds is one that admits such a decomposition. The main result of this paper is the following.

\begin{theorem}\label{Tgeometric}
Let $M$ be an aspherical $4$-manifold admitting a geometric decomposition in the sense of Hillman. Then
	\begin{equation}\label{E3sigma}
	\chi(M) \ge 3|\sigma(M)|.
	\end{equation}
Moreover, there exist non-geometric aspherical $4$-manifolds that have non-vanishing signature and attain the equality.
\end{theorem}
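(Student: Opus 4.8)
The plan is to isolate a single numerical invariant of the cusps of a finite-volume complex-hyperbolic surface and then to force it to vanish. Let $X$ be a non-compact finite-volume quotient of $\IC\IH^2$ with neat cusps, let $\overline{X}$ be its smooth toroidal compactification, and let $E_1,\dots,E_k$ be the compactifying elliptic curves, of degrees $d_i:=-E_i^2>0$. Truncating the cusps yields a compact piece $\overline{X}^{\circ}$ whose boundary $\bigsqcup_i N_i$ consists of the Nil-manifolds $N_i$ (circle bundles of Euler number $d_i$ over $T^2$). First I would record the two additive invariants of $\overline{X}^{\circ}$. Since $\chi(N_i)=\chi(E_i)=0$, excision gives $\chi(\overline{X}^{\circ})=\chi(X)$; and since the disk bundle $\nu(E_i)$ has $\sigma(\nu(E_i))=\sign(E_i^2)=-1$, Novikov additivity gives $\sigma(\overline{X}^{\circ})=\sigma(\overline{X})+k$. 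Combining adjunction $K_{\overline{X}}\cdot E_i=d_i$ and the logarithmic proportionality $(K_{\overline{X}}+\sum_i E_i)^2=3\chi(X)$ with Hirzebruch's signature formula, I expect the clean identity
\[
\chi(\overline{X}^{\circ})-3\,\sigma(\overline{X}^{\circ})=\sum_{i=1}^{k}(d_i-3).
\]

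This identity is the crux: the signature defect of a truncated cusped piece measures only how the cusp degrees deviate from $3$. To realize equality I would therefore look for a piece of \emph{total defect zero}, i.e. with $\sum_i d_i=3k$, while retaining $\sigma(\overline{X}^{\circ})=\tfrac13\chi(X)\neq0$. Two sources of such pieces seem available: an explicit arithmetic ball quotient all of whose cusps have degree exactly $3$, or, more flexibly, one that balances cusps of degree $>3$ against cusps of degree $<3$. Finite covers and the classical Hirzebruch and Deligne--Mostow ball quotients should furnish enough flexibility to realize $\sum_i d_i=3k$ on the nose.

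Given a zero-defect piece $\overline{X}^{\circ}$, I would close it into a closed aspherical $4$-manifold $M$ without reintroducing defect or killing the signature. The cleanest choice is the orientation-coherent double $M=\overline{X}^{\circ}\cup_{\phi}\overline{X}^{\circ}$ along an orientation-reversing diffeomorphism $\phi$ of the boundary (which exists since each $N_i$ admits an orientation-reversing self-diffeomorphism, exchanging Euler numbers $d_i$ and $-d_i$). Then $\chi(M)=2\chi(\overline{X}^{\circ})$ and, by Novikov additivity, $\sigma(M)=2\sigma(\overline{X}^{\circ})\neq0$, so the identity above gives exactly $\chi(M)=3|\sigma(M)|$. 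Asphericity holds because both halves are aspherical, the Nil-manifold boundary is $\pi_1$-injective on each side (parabolic subgroups inject into complex-hyperbolic lattices), and a graph of aspherical manifolds glued along $\pi_1$-injective aspherical submanifolds is aspherical. Non-geometricity follows because $\pi_1(M)$ is a nontrivial amalgam over a virtually nilpotent group, which is not isomorphic to a lattice in any of the nineteen model geometries. When I want the example to sit visibly outside the class of higher graph $4$-manifolds (for which the strict inequality is proved separately), I would instead cap the cusps with $\pi_1$-injective $\mathrm{Nil}^3\times\IR$ pieces: these contribute $0$ to both $\chi$ and $\sigma$, hence leave the defect and the signature untouched, but introduce a non-complex-hyperbolic vertex.

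The main obstacle is concentrated in the middle step. Establishing the defect identity demands care with orientations and with the compatibility of Novikov additivity across truncation, toroidal compactification, and the proportionality normalization; a single sign slip would reverse the inequality. More substantively, producing a complex-hyperbolic surface with $\sum_i(d_i-3)=0$ is a genuine arithmetic constraint, and the delicate point is to achieve it while simultaneously keeping the cusps neat and the signature nonzero. One must also certify that these equality examples are genuinely \emph{not} instances of the higher graph $4$-manifolds of the later sections---this is precisely the role of the defect-zero configuration, which forces the degenerate degree $d_i=3$ absent from the strict regime. Once such a surface is in hand, the remaining topological verifications---asphericity, the value $\chi=3|\sigma|$, and the absence of any single geometric structure---are routine, the last following from the amalgam description of $\pi_1(M)$.
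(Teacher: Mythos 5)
Your proposal has two genuine gaps, one of scope and one of substance. First, it addresses only the second sentence of the theorem (the equality examples) and says nothing about the inequality $\chi(M)\ge 3|\sigma(M)|$ itself, which must be proved for \emph{every} aspherical geometrically decomposable $4$-manifold. By Hillman's classification the pieces can carry any of the geometries $\IH^2\times\IH^2$, $\IH^4$, $\IH^3\times\IE^1$, $\IH^2\times\IE^2$, $\widetilde{\IS\IL}\times\IE^1$, $\IH^2(\IC)$, or $\IF^4$, and the paper handles this by writing $\chi$ and $\sigma$ as sums of curvature integrals over the pieces (Gauss--Bonnet plus the Atiyah--Patodi--Singer theorem for cuspidal metrics, with the boundary $\eta$-invariants cancelling in pairs), and then checking $\chi^{(2)}\pm 3\sigma^{(2)}\ge 0$ geometry by geometry: $|W^+|=|W^-|$ for the five geometries with a flat or real-hyperbolic factor, an explicit computation for $\IF^4$, and Hirzebruch proportionality for $\IH^2(\IC)$. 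None of this appears in your plan.

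Second, your equality construction fails at the gluing step. Your ``defect identity'' $\chi(\overline{X}^{\circ})-3\sigma(\overline{X}^{\circ})=\sum_i(d_i-3)$ is correct (it is the paper's formula $\sigma(Y)=\chi(Y)/3-\sum_i(n_i/3-1)$), but the claim that each cusp cross-section $N_i$ admits an orientation-reversing self-diffeomorphism is false: $N_i$ is a circle bundle over $T^2$ with Euler number $d_i>0$, it has a unique Seifert fibration, and reversing orientation flips the sign of the Euler number of that fibration, so no such diffeomorphism exists. Consequently two complex-hyperbolic pieces can only be glued after reversing the orientation of one of them, and Novikov additivity then gives $\sigma(M)=\sigma(\overline{X}^{\circ})-\sigma(\overline{X}^{\circ})=0$ for your double. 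This is not a repairable sign issue: it is precisely the mechanism by which the paper proves the \emph{strict} inequality for all higher graph $4$-manifolds with complex-hyperbolic vertices, and your double is such a manifold, so it cannot attain equality with $\sigma\neq 0$. The paper's way out is to interpose pieces of geometry $\IF^4$ (torus bundles over punctured surfaces, pulled back along suitable quadruple covers so that their boundary components become circle bundles over $T^2$ with Euler number $12$): these contribute $0$ to both $\chi$ and $\sigma$ yet present their boundaries with the orientations needed to glue all complex-hyperbolic pieces with the complex orientation. Your closing remark about capping with $\mathrm{Nil}^3\times\IR$ pieces gestures at this, but such pieces are not finite-volume geometric pieces in Hillman's sense and, as interval bundles over the cusp cross-sections, do not change the orientation obstruction; the whole difficulty of the construction is concentrated exactly where your proposal is vaguest.
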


The class of geometrically decomposable $4$-manifolds includes two large classes of aspherical $4$-manifolds: \textit{extended graph $4$-manifolds} and \textit{higher graph $4$-manifolds}. These graph-like manifolds were introduced respectively by Frigerio, Lafont, and Sisto~\cite{LafontBook} and by Connell and Su\'arez-Serrato~\cite{CSS19} in arbitrary dimension. The common machinery behind their constructions is based on the idea of \textit{aspherical unions} of aspherical manifolds with aspherical and $\pi_1$-injective boundaries using a classical result of Whitehead~\cite{Whi39}.

Extended graph $n$-manifolds were defined by Frigerio, Lafont, and Sisto in~\cite[Definition 0.2]{LafontBook}. These manifolds are graph-like. They are decomposed into finitely many pieces (the vertices), each vertex is a manifold with torus boundary components (the edges), and the various pieces are glued together via affine diffeomorphisms. The interior of each vertex is diffeomorphic either to a finite-volume real-hyperbolic manifold with toral cusps (the \emph{pure pieces}), or to the product of a standard torus with a lower-dimensional finite-volume real-hyperbolic manifold with toral cusps (the \emph{product pieces}). In this paper, we also consider a larger class of extended graph $4$-manifolds. More precisely, we allow the vertices to be finite-volume real-hyperbolic $4$-manifolds, whose cusps are not necessarily toral.

\begin{theorem}\label{Treal}
	Let $Z$ be a closed, oriented, extended graph $4$-manifolds whose vertices are orientable finite-volume real-hyperbolic $4$-manifolds with possibly non-torus cusps. Then $\sigma(Z)=0$.
\end{theorem}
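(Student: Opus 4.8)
The plan is to reduce to the individual hyperbolic vertices by cutting $Z$ along its gluing hypersurfaces, and then to show that each piece contributes nothing to the signature. Since $Z$ is obtained by gluing the truncated vertex pieces $V_1,\dots,V_k$ along closed flat $3$-manifolds (the edges), Novikov additivity gives
\[
\sigma(Z)=\sum_{i=1}^{k}\sigma(V_i),
\]
where $\sigma(V_i)$ denotes the signature of the intersection form on the image of $H_2(V_i;\mathbb{R})\to H_2(V_i,\partial V_i;\mathbb{R})$. It therefore suffices to control the signature of a single truncated, orientable, finite-volume real-hyperbolic $4$-manifold $V$ with flat boundary.

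The first key point is that the complete hyperbolic metric on the interior of $V$ is conformally flat, so its Weyl tensor vanishes identically. Because the signature density (the degree-four term of the Hirzebruch $L$-polynomial) equals, up to a universal constant, the conformally invariant quantity $|W^+|^2-|W^-|^2$, the bulk contribution $\tfrac13\int_V p_1$ vanishes for every metric in the hyperbolic conformal class. To combine this with a boundary correction I would invoke the Atiyah--Patodi--Singer signature theorem, which requires a product metric near $\partial V$. Here I would use that near a cusp the hyperbolic metric takes the form $u^{-2}(du^2+g_F)$ with $g_F$ flat, so the flat product metric $du^2+g_F$ lies in the same conformal class and is already a product near the boundary. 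Choosing such a representative $g$, the theorem yields
\[
\sigma(V)=\tfrac13\int_V p_1(g)-\eta(\partial V,g)=-\,\eta(\partial V,g),
\]
where $\eta$ is the eta invariant of the odd signature operator on the flat boundary.

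It then remains to understand $\eta$ of the signature operator on the flat cusp cross-sections. When every cusp is toral the cross-section is a flat $3$-torus, and the orientation-reversing isometry $x\mapsto -x$ makes the spectrum of the signature operator symmetric, so $\eta=0$ and $\sigma(V)$ vanishes; summing recovers $\sigma(Z)=0$ in the classical setting. The genuinely new difficulty, and the step I expect to be the main obstacle, is the case of non-toral cusps: the cross-section is then a general orientable flat $3$-manifold, which need not admit an orientation-reversing isometry, so the symmetry argument is unavailable. Here I would argue that the eta invariant of the signature operator on an orientable flat $3$-manifold vanishes regardless, exploiting that this eta invariant is independent of the chosen flat metric---the cylinder interpolating between two homothetic flat metrics carries a conformally flat, hence signature-flat, metric---and then reducing to a convenient flat structure adapted to the Bieberbach holonomy. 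As an alternative I would try to cancel the boundary contributions in pairs across each gluing; the delicate point is precisely that the affine identification maps need not be isometries of the induced flat metrics, so this cancellation is not automatic, and it is the metric-independence of the flat signature eta invariant that ultimately makes the argument go through.
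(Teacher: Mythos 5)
Your reduction via Novikov additivity and the Atiyah--Patodi--Singer theorem (with the conformal flatness of the hyperbolic metric killing the bulk term) is sound and recovers Hitchin's formula $\sigma(V)=-\eta(\partial V)$, which is exactly the paper's starting point. The genuine gap is the step you yourself flag as the main obstacle: the claim that the signature $\eta$-invariant of \emph{every} orientable flat $3$-manifold vanishes is false. Of the six orientable flat $3$-manifolds $A,\dots,F$ in the Hantzsche--Wendt classification, three have nonzero $\eta$: up to orientation, $\eta(C)=-\tfrac23$, $\eta(D)=-1$, $\eta(E)=-\tfrac43$ (these are $M(0;-1;\tfrac13,\tfrac13,\tfrac13)$, $M(0;-1;\tfrac12,\tfrac14,\tfrac14)$, and $0$-surgery on the trefoil, respectively; the values are due to Ouyang and were made explicit by Long--Reid). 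Your metric-independence argument only shows that $\eta$ is an invariant of the oriented diffeomorphism type of the flat manifold; it gives no reason for it to vanish, and indeed it does not. Consequently the individual truncated pieces can have $\sigma(V_i)\neq 0$, and the theorem cannot be proved one vertex at a time.

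The alternative you sketch in your final sentence is the correct route, and it is what the paper does: a cusp cross-section of type $C$, $D$, or $E$ can only be glued to a cross-section of the same diffeomorphism type; since $Z$ is oriented, the gluing identifies the two boundary components with opposite induced orientations; and $\eta$ changes sign under orientation reversal. Hence the nonzero $\eta$-contributions cancel in pairs in $\sum_i\sigma(V_i)$, giving $\sigma(Z)=0$. The metric-independence of the flat signature $\eta$-invariant, which you correctly isolate, is precisely what legitimizes this cancellation even though the affine gluing maps need not be isometries. So the right ingredients appear in your write-up, but the argument you present as primary rests on a false vanishing claim, and the one that works is left undeveloped.
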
 

Higher graph $n$-manifolds were defined by Connell and Su\'arez-Serrato in~\cite[Definition 1]{CSS19}. These manifolds are also graph-like. They are decomposed into finitely many pieces, each vertex is a manifold with nilmanifold boundary and the various pieces are glued together via affine diffeomorphisms. In particular, only diffeomorphic nilmanifolds can be paired in the gluing process. The interior of each vertex is diffeomorphic to a finite-volume complex-hyperbolic manifold with nilmanifold (or \emph{torus-like} in the terminology of~\cite{DG23}) cusps. In this paper, we consider a larger class of higher graph $4$-manifolds. Namely, we allow the vertices to be complex-hyperbolic surfaces with general infra-nilmanifold cusps. For such manifolds, we have a sharper inequality than in Theorem \ref{Tgeometric} and also the first examples of higher graph $4$-manifolds with non-vanishing signature.

\begin{theorem}\label{Tcomplex}
	For any closed, oriented, higher graph $4$-manifold $Z$ whose vertices are truncated complex-hyperbolic surfaces with cusps (not necessarily nilmanifold cusps), we have
	\begin{align}\label{TI}
	\chi(Z)>3|\sigma(Z)|.
	\end{align}
	Moreover, this inequality is sharp, i.e., there exists a sequence $\{Z_{n}\}$ of such manifolds that satisfies
	\[
	\lim_{n\to\infty}\frac{\sigma(Z_n)}{\chi(Z_n)}=\frac{1}{3}.
	\]
\end{theorem}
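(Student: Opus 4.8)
The plan is to reduce the global inequality to a sharp per-vertex estimate, exploiting that both invariants are additive under the graph gluing. Write $Z = V_1 \cup \cdots \cup V_k$, where each $V_i$ is a truncated complex-hyperbolic surface and the pieces are glued along their infra-nilmanifold boundary components. Each gluing locus is a closed $3$-manifold and hence has vanishing Euler characteristic, so a Mayer--Vietoris/inclusion--exclusion count gives $\chi(Z) = \sum_i \chi(V_i)$. For the signature I would invoke Novikov additivity for compact oriented manifolds glued along their boundaries, obtaining $\sigma(Z) = \sum_i \sigma(V_i)$, where $\sigma(V_i)$ is the signature of the intersection form of the oriented manifold-with-boundary $V_i$ in the orientation induced from the fixed orientation of $Z$. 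That induced orientation need not be the complex one, which is why the next step is stated with absolute values.

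The core of the proof is the strict per-vertex bound $|\sigma(V_i)| < \tfrac13 \chi(V_i)$. To establish it I would run Gauss--Bonnet and the Atiyah--Patodi--Singer signature theorem in parallel for the finite-volume complex-hyperbolic metric on the interior. Gauss--Bonnet identifies $\chi(V_i)$ with a positive multiple of the volume, while APS gives $\sigma(V_i) = \tfrac13\int_{V_i} p_1 - \eta(\partial V_i)$, with $\eta$ the signature eta-invariant of the cusp cross-sections. On $\mathbb{CH}^2$ the Euler form and the first Pontryagin form are pointwise proportional (equivalently $c_1^2 = 3c_2$ pointwise), so the two bulk integrals agree up to boundary terms, and the discrepancy $D_i := \chi(V_i) - 3\sigma(V_i)$ is a sum of cusp contributions. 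The decisive input, which I would take from \cite{DG23}, is that this defect has controlled sign and magnitude, yielding the strict bound $|\sigma(V_i)| < \tfrac13\chi(V_i)$, with strictness guaranteed by the presence of at least one cusp. I expect this to be the main obstacle: one must control the eta-invariants and transgression terms for \emph{general} infra-nilmanifold cusp cross-sections, not merely the Heisenberg nilmanifolds, and pin down their sign.

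Granting the per-vertex estimate, the inequality \eqref{TI} follows at once:
\[
3|\sigma(Z)| = 3\left|\sum_i \sigma(V_i)\right| \le 3\sum_i |\sigma(V_i)| < \sum_i \chi(V_i) = \chi(Z),
\]
strictness being automatic since $Z$ has at least one vertex and each summand is strict.

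For sharpness I would exhibit vertices along which the relative defect $D_i/\chi(V_i)$ tends to $0$ and glue them so that the signatures add rather than cancel. The second point is exactly where allowing general infra-nilmanifold (rather than pure nilmanifold) cusps is essential: the standard double $V \cup \bar V$ has vanishing signature, so one needs an orientation-reversing affine self-diffeomorphism of the cusp cross-section in order to glue two copies carrying the \emph{same} complex orientation into a closed oriented $Z_n$ with $\sigma(Z_n) = 2\sigma(V_n) \ne 0$ and $\chi(Z_n) = 2\chi(V_n)$. Heisenberg nilmanifolds admit only orientation-preserving affine maps, whereas suitable infra-nilmanifold cross-sections admit orientation-reversing ones. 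Choosing the $V_n$ from a tower of finite covers of a fixed cusped surface, arranged so that the cusp count stays controlled while $\mathrm{Vol}(X_n) \to \infty$, makes $\chi(V_n)$ grow linearly in the degree while $D_n$ grows sublinearly, so that
\[
\frac{\sigma(Z_n)}{\chi(Z_n)} = \frac{\sigma(V_n)}{\chi(V_n)} = \frac13 - \frac{D_n}{3\chi(V_n)} \longrightarrow \frac13 .
\]
The delicate points are realizing orientation-reversing gluings compatibly along the tower and proving the sublinear growth of the total cusp defect.
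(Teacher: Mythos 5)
Your reduction to a per-vertex bound $|\sigma(V_i)| < \tfrac13\chi(V_i)$ for the Novikov signature of each truncated piece is the fatal step: that bound is false. By Hirzebruch proportionality applied to the toroidal compactification $(X,D)$ of a piece $Y$ with $b$ cusps whose compactifying elliptic curves have self-intersections $-n_1,\dots,-n_b$, one gets
\[
\sigma(Y)=\frac{\chi(Y)}{3}-\sum_{i=1}^{b}\Big(\frac{n_i}{3}-1\Big),
\]
so the defect $\chi(Y)-3\sigma(Y)=\sum_i (n_i-3)$ has no definite sign and is not small. For the surfaces $H_n$ of~\cite{DG23} (four cusps, each with $n_i=n$, and $\chi=n$) this gives $\sigma(H_n)=4-n$; already $H_2$ has $\sigma(H_2)=2=\chi(H_2)$, violating your proposed bound by a factor of three. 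The actual mechanism is the opposite of what you propose: after the eta-invariants cancel in the gluing, each piece contributes its $L^2$-signature, which satisfies the \emph{equality} $\chi^{(2)}=3|\sigma^{(2)}|$ by Hirzebruch proportionality, with a sign determined by whether the piece carries the complex or anti-complex orientation. Strictness therefore cannot come from a per-vertex strict inequality; it comes from an orientation constraint: with the complex orientation every cusp cross-section is finitely covered by a circle bundle over $T^2$ with positive Euler number, such $3$-manifolds admit no orientation-reversing diffeomorphisms, so adjacent vertices are forced to carry opposite orientations and the signed contributions $\pm\chi(V_i)/3$ cannot all agree in sign.

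The same fact undercuts your sharpness construction. You propose gluing two copies of $V_n$ carrying the same complex orientation via an orientation-reversing affine self-diffeomorphism of an infra-nilmanifold cusp cross-section; no such diffeomorphism exists for these cross-sections (the sign of the Euler number of the unique Seifert fibration obstructs it and persists under finite covers), and if one did exist the strict inequality itself would fail by your own logic. The paper instead achieves sharpness by gluing one very large piece $Y_n$ (Hirzebruch's examples with $\chi=n^7$ and $4n^4$ cusps of self-intersection $-n$) to $n^4$ small reversed-orientation copies $\bar H_n$, giving $\sigma(Z_n)=\tfrac13(n^7-n^5)$ and $\chi(Z_n)=n^7+n^5$; the ratio tends to $1/3$ because the dominant piece overwhelms the oppositely-oriented ones, not because any per-vertex defect tends to zero.
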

\vspace{0.2in}

Theorems \ref{Tgeometric}, ~\ref{Treal}, and~\ref{Tcomplex} provide support to Conjecture~\ref{MainC}. It is tantalizing to wonder if it is sensible to sharpen Conjecture~\ref{MainC} to a tighter constraint on the geography of aspherical $4$-manifolds. It also seems promising to pursue an in-depth study of asphericity, the existence of Einstein metrics, and the circle of ideas of LeBrun's Riemannian Bogomolov--Miyaoka--Yau inequality~\cite{LeB95}. More precisely, it would be interesting to determine whether our examples of non-geometric aspherical $4$-manifolds with $\sigma\neq 0$ saturating Inequality~\eqref{E3sigma} admit Einstein metrics. These manifolds are indeed prime candidates for being aspherical $4$-manifolds with non-vanishing signature, which do \emph{not} support Einstein metrics. No such examples seem to be currently available in the literature, and apparently  the first examples of aspherical $4$-manifolds with $\chi>0$ (but with $\sigma=0$!) supporting no Einstein metrics, appeared only very recently in \cite{DiC22}. On the other hand, aspherical $4$-manifolds with $\chi=\sigma=0$ supporting no Einstein metrics are easy to construct thanks to the work of Hitchin \cite{Hit74}, e.g., non-flat $4$-nilmanifolds. Finally, we observe that LeBrun's work on the Riemannian Bogomolov--Miyaoka--Yau inequality implies  our examples (of non-geometric aspherical $4$-manifolds with $\sigma\neq 0$ saturating Inequality~\eqref{E3sigma}) \emph{cannot} simultaneously admit an Einstein metric and a spin$^c$ structures of almost-complex type with non-trivial Seiberg--Witten invariant. 

\subsection*{Acknowledgments}
LFDC thanks Grigori Avramidi, Igor Belegradek, Claude LeBrun, and Stefano Vidussi for interesting discussions around the signature of $4$-manifolds. Part of this work was carried out at the University of Florida: MG thanks this institution for the hospitality and support. This work is partially supported by the NSF grant DMS-2104662.

\section{Signature of extended graph $4$-manifolds with real-hyperbolic pieces}\label{real}	
	
In this section, we provide a combinatorial proof of Theorem~\ref{Treal}. This is a warm-up for the proof of Theorem~\ref{Tgeometric}, but is somewhat more concrete.

Recall that Theorem~\ref{Treal} asserts that extended graph $4$-manifolds with real hyperbolic pieces has vanishing signature. As mentioned in the introduction, this class of $4$-manifolds is a variation of the one considered by Lafont, Frigerio, and Sisto. The differences are two: here we do not consider product pieces (but the statement would still be true for those as well), but we allow the hyperbolic pieces of the $4$-manifold to have arbitrary flat $3$-manifolds as cusps, rather than just $3$-tori.

We start by recalling some facts about flat $3$-manifolds.

If $\Gamma\backslash \textbf{H}^{4}_{\IR}$ is orientable, then the cusp cross-sections are flat orientable closed $3$-manifolds. Up to (not necessarily orientation-preserving) diffeomorphism, there are six orientable flat compact  $3$-manifolds, denoted in the original classification of Hantzsche and Wendt by the letters $A$ to $F$~\cite{HW35}.
We refer to the interesting paper of Ratcliffe and Tschantz~\cite{RT00} for the necessary background on real-hyperbolic $4$-manifolds and their cusps. $A$ is the $3$-torus, and $F$ is the celebrated Hantzsche--Wendt $3$-manifold: this is the unique flat homology $3$-sphere.

We also refer to Scott's survey~\cite{Scott} for the classification of Seifert fibrations on flat $3$-manifolds that we recall now. First, let us set the notation: we denote with $M(g; e; r_1, \dots, r_n)$ the $3$-manifold that Seifert fibres over an orientable surface of genus $g$ (if $g \ge 0$) or over a non-orientable surface with first $\IZ/2\IZ$-Betti number $-g$ if $g < 0$, with Euler number $e$, and $n \ge 0$ singular fibres parametrized by $r_1, \dots, r_n \in \IQ$.

\begin{enumerate}[label = \Alph*:, itemsep = 3pt]
\item This is $T^3 = M(1; 0; )$.
\item This is the unit cotangent bundle of the Klein bottle; it has two Seifert fibrations: $M(0;-2;\frac12,\frac12,\frac12,\frac12) = M(-2;0;)$ (for instance, see~\cite[Section~10.14]{FomenkoMatveev}).
\item This is $M(0;-1;\frac13,\frac13,\frac13)$.
\item This is $M(0;-1;\frac12,\frac14,\frac14)$.
\item This is the $0$-surgery along a trefoil, which is also $M(0;-1;\frac12,\frac13,\frac16)$.
\item This is $M(-1;-1;\frac12,\frac12)$.
\end{enumerate}

The computation of the $\eta$-invariants of Seifert fibred 3-manifolds is due to Ouyang~\cite{Ouyang}, and has been made explicit for flat $3$-manifolds in~\cite{LR00}. Recall that reversing the orientation flips the sign of the $\eta$-invariant. Up to sign, or up to choosing an orientation, the $\eta$-invariants of the six flat $3$-manifolds are:
\[
\eta(A)=\eta(B)=\eta(F)=0, \quad \eta(C)=-\frac{2}{3}, \quad \eta(D)=-1, \quad \eta(E)=-\frac{4}{3}.
\]
Notice that if the $\eta$-invariant does not vanish, then the corresponding $3$-manifold does not admit an orientation-reversing diffeomorphism. 

With these preliminaries into place, let us now prove Theorem~\ref{Treal}.

\begin{proof}[Proof of Theorem~\ref{Treal}] 
Let $Y=\Gamma\backslash \textbf{H}^{4}_{\IR}$ be an orientable, finite-volume hyperbolic $4$-manifold with cusps. If by a slight abuse of notation we denote still by $Y$ the truncated manifold with boundary, then because of Hitchin's formula~\cite{Hit97} (see also~\cite{LR00}):
\[
\boxed{\sigma(Y)=\sum_i\pm\eta(C_i)+\sum_j\pm\eta(D_j)+\sum_k\pm\eta(E_k)},
\]
where the indices $i$, $j$, $k$ count respectively the number of $C$, $D$, $E$ cusps of $Y$. In other words, the signature of the compact, oriented manifold with boundary $Y$ is the sum of the eta-invariants of its boundary components of type $C$, $D$, and $E$. The plus and minus signs depend on which orientation is induced on such boundaries. Thus, say 
\[
Z=\bigcup^M_{\ell=1} Y_\ell
\]
is an orientable $4$-manifold obtained by gluing such orientable truncated real-hyperbolic $4$-manifolds. First, notice that $A, B, C, D, E, F$ have different diffeomorphism types so each of them can only be paired with a cusp of the same diffeomorphism type in the gluing process. Next, because of Novikov's additivity formula we have:
\begin{align}\label{HitchinGraph}
\sigma(Z)=\sum^M_{l=\ell} \sigma(Y_\ell)=\sum^M_{l=1}\Big(\sum^{c_\ell}_{i=1}\pm\eta(C_{i\ell})+\sum^{d_\ell}_{j=1}\pm\eta(D_{j\ell})+\sum^{e_\ell}_{k=1}\pm\eta(E_{k\ell})\Big)
\end{align}
where $c_\ell, d_\ell, e_\ell$ are respectively the numbers of $C, D, E$ cusps of the truncated real-hyperbolic manifold $Y_{\ell}$. Now, since the flat $3$-manifolds $C, D, E$ do not admit orientation-reversing diffeomorphism, we have that any such boundary in say $Y_\ell$ has to be paired with a boundary in some other $Y_{\ell^{\prime}}$ with the \emph{opposite} orientation and as such with the opposite $\eta$-invariant. Thus, the sum in Equation~\eqref{HitchinGraph} is necessarily zero. This concludes the proof.
\end{proof}

\section{Geometric decomposition, signatures, and Euler characteristics}

There are $19$ geometries in dimension $4$~\cite{Filipkiewicz}, in the sense of Klein's Erlangen program. That is to say, there are (up to rescaling and isometries) $19$ complete, simply-connected Riemannian $4$-manifolds whose isometry group acts transitively. A $4$-manifold whose universal cover is one of these $19$ manifolds is called \emph{geometric}. Hillman studied the analogue of JSJ decompositions of $4$-manifolds, namely decompositions of $4$-manifolds along hypersurfaces into geometric pieces. Contrarily to the case of $3$-manifolds, not all $4$-manifolds decompose into geometric pieces. Among other things, he proved the following theorem.

\begin{theorem}[Hillman,~{\cite[Theorem~7.2]{Hil02}}]\label{THillman}
If a closed, oriented $4$-manifold $M$ is geometrically decomposed by a (possibly empty or disconnected) closed orientable $3$-manifold $S \subset M$, then either
\begin{enumerate}
\item $M$ is geometric, or
\item $M$ is the total space of an orbifold bundle with general fibre $S^2$ over a hyperbolic $2$-orbifold, or
\item the components of $M \setminus S$ all have geometry $\IH^2\times\IH^2$, or
\item the components of $M \setminus S$ all have geometry $\IH^4$, $\IH^3\times \IE^1$, $\IH^2\times \IE^2$, or $\widetilde{\IS\IL}\times \IE^1$, or
\item the components of $M\setminus S$ have geometry $\IH^2(\IC)$ or $\IF^4$.
\end{enumerate}
In cases (3)--(5), $\chi(M) \ge 0$. In cases (4) and (5), $M$ is aspherical.
\end{theorem}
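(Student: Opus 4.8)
The plan is to run a case analysis over the nineteen four-dimensional geometries of Filipkiewicz~\cite{Filipkiewicz}, organized around the principle that a geometric decomposition is controlled by the cusp cross-sections of its pieces. First I would record, for each geometry $\mathbb{X}$, the diffeomorphism types of the closed $3$-manifolds that arise as cusp cross-sections of a finite-volume quotient $\Gamma\backslash\mathbb{X}$, together with the structure of the $\pi_1$-injective ends. The decisive point is that when two pieces are glued across a component of $S$, the geometric collar structures on the two sides must be compatible, so the cross-section $3$-manifold and its intrinsic geometry are shared. This converts the problem into a question of which geometries can be adjacent across a common cusp type.

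Against this backdrop the five cases should emerge as follows. A piece carrying one of the compact geometries $\IS^4$, $\IC\IP^2$, $\IS^2\times\IS^2$ is already closed, forcing $S=\emptyset$ and $M$ geometric, which is case~(1). The geometry $\IS^2\times\IH^2$ produces $S^2$-fibred pieces over hyperbolic bases whose gluings assemble into an orbifold $S^2$-bundle over a hyperbolic $2$-orbifold, which is case~(2); the remaining non-aspherical geometries with a spherical factor are disposed of similarly or fail to admit finite-volume pieces with the requisite cusps. The aspherical geometries are then partitioned by cusp type: $\IH^2\times\IH^2$ has solvmanifold cusp cross-sections, incompatible with the others, so such pieces can only be glued to one another (case~(3)); the family $\IH^4$, $\IH^3\times\IE^1$, $\IH^2\times\IE^2$, $\widetilde{\IS\IL}\times\IE^1$ shares flat and Seifert-fibred cusp cross-sections and glues internally (case~(4)); and $\IH^2(\IC)$ together with $\IF^4$ shares infra-nilmanifold cusp cross-sections (case~(5)). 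I expect the main obstacle to be establishing exactly this partition, since it requires the full structure theory of finite-volume quotients of each geometry and a complete classification of their $\pi_1$-injective ends, together with the verification that the cross-section types across distinct families are genuinely incompatible.

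For the inequality $\chi(M)\ge 0$ in cases~(3)--(5) I would use additivity of the Euler characteristic. Writing $M=\bigcup_i M_i$ for the compact truncated pieces glued along the closed $3$-manifolds $S_j$, inclusion--exclusion gives $\chi(M)=\sum_i\chi(M_i)-\sum_j\chi(S_j)$, and since each $S_j$ is a closed odd-dimensional manifold we have $\chi(S_j)=0$, so $\chi(M)=\sum_i\chi(M_i)$. Each $M_i$ is homotopy equivalent to its open finite-volume interior, and the Chern--Gauss--Bonnet integrand of each geometry occurring in~(3)--(5) has a fixed sign: for $\IH^2\times\IH^2$ and $\IH^2(\IC)$ the Euler characteristic is a positive multiple of the volume, while for the product and solvable geometries $\IH^3\times\IE^1$, $\IH^2\times\IE^2$, $\widetilde{\IS\IL}\times\IE^1$, $\IF^4$, each carrying a flat or circle direction, the Euler characteristic vanishes. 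Hence $\chi(M_i)\ge 0$ for every piece and therefore $\chi(M)\ge 0$.

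Finally, for asphericity in cases~(4) and~(5) I would invoke Whitehead's gluing theorem~\cite{Whi39}, exactly as in the aspherical-union constructions recalled in the introduction. The universal cover of each geometry in these two families is diffeomorphic to $\IR^4$, so every open piece, and hence every truncated piece $M_i$, is aspherical; the separating cross-sections $S_j$ are flat, Nil, Sol, or infra-nilmanifolds, hence aspherical, and they are $\pi_1$-injective in the adjacent pieces by the definition of the decomposition. Gluing aspherical manifolds along aspherical $\pi_1$-injective boundary components yields an aspherical manifold, so $M$ is aspherical in cases~(4) and~(5).
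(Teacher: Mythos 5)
Note first that the paper does not prove this statement: it is quoted, with attribution, from Hillman's book~\cite{Hil02}, so your proposal can only be measured against Hillman's own argument and against internal consistency. Your outline does have the right skeleton --- sort the finite-volume pieces by geometry, match pieces across components of $S$ by the type of their end cross-sections, get $\chi(M)\ge 0$ from additivity plus a per-piece sign analysis, and get asphericity from a Whitehead-type gluing argument --- but as written it is a plan rather than a proof, and what is deferred is precisely the content of the theorem. The partition into cases (1)--(5) is asserted, not derived: you yourself flag it as ``the main obstacle.'' To close it one needs (a) that finite-volume quotients of the compact geometries, of the flat/nilpotent/solvable geometries, and of $\IS^3\times\IE^1$, $\IS^2\times\IE^2$ are automatically closed (lattices there are cocompact), so such pieces only arise when $M$ is geometric; (b) the determination of the possible end cross-sections for each remaining geometry; and (c) the mutual incompatibility of those classes of cross-sections. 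Your claim that $\IH^2\times\IH^2$ pieces have solvmanifold cross-sections is moreover only correct for irreducible lattices; reducible pieces (products of punctured hyperbolic surfaces) have graph-manifold ends, and handling them is part of why case (3) stands apart. Two smaller points: $\IH^4$ is missing from your sign analysis of the Gauss--Bonnet integrand (it has $\chi>0$), and since the pieces are noncompact of finite volume, the Euler-characteristic computation needs the Cheeger--Gromov version of Gauss--Bonnet~\cite{CG85}, as the paper itself uses in Proposition~\ref{Pchiandsigma}.

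The asphericity step contains a genuine error rather than just an omission. You claim the cross-sections are $\pi_1$-injective in the adjacent pieces ``by the definition of the decomposition,'' but Hillman's definition of a geometric decomposition requires only that the components of $M\setminus S$ be geometric of finite volume; $\pi_1$-injectivity of $S$ is not part of it and must be proved. In cases (4) and (5) it does hold, because each cross-section is a compact flat or infra-nil quotient of a horosphere by a peripheral subgroup of the lattice, and peripheral subgroups inject; with that in hand the aspherical-union argument via~\cite{Whi39} goes through. The tell that your shortcut cannot be right is that, as stated, it applies verbatim to case (3): those pieces also have contractible universal cover and aspherical (Sol or graph-manifold) cross-sections, so your argument would prove $M$ aspherical in case (3) as well --- yet the theorem deliberately refrains from asserting this, and the paper's remark following it notes that in case (3) one ``may or may not'' obtain aspherical manifolds. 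An argument that proves too much is not a proof of the stated claim.
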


\begin{remark}
We want to understand when the 4-manifolds in cases (1)--(3) can be aspherical. In case (1), $M$ is aspherical if and only if its universal model is. This rules out the geometries $\IS^4$, $\IS^3\times \IE^1$, $\IS^2\times \IE^2$, $\IS^2\times \IH^2$, $\IS^2\times \IS^2$, $\IC\IP^2$. In case (2), $M$ is never aspherical: in fact, Hillman~\cite[Section~7]{Hil02} shows that the universal cover of $M$ retracts onto $S^2$ (the preimage of a fiber), and in particular $M$ is not aspherical. In case (3), we may or may not obtain aspherical $4$-manifolds. It is easy to construct aspherical examples if each piece is a Hilbert modular surface. In this case, the cross sections of the cusps are infra-solvmanifolds and $\pi_1$-injective (see, for example,~\cite{McR08}). The resulting geometric $4$-manifold is aspherical with $\chi>0$ and $\sigma=0$, see Proposition~\ref{Pchiandsigma}.
\end{remark}

We now compute the Euler characteristic and signature of a geometrically decomposable $4$-manifolds as curvature integrals. More precisely, we express these characteristic numbers as a sum of curvature integrals over each non-compact geometric piece. Each geometry has its complete locally homogeneous metric, but, for ease of notation, we do not explicitly refer to each metric during the proof of the following result.

\begin{proposition}\label{Pchiandsigma}
Suppose that $M$ is an oriented $4$-manifold that is geometrically decomposed along $S$ into $N$ geometric pieces $M_1, \dots, M_N$ as above. Let $\mu_\ell$, $W_\ell^\pm$, $\mathring{\Ric}_\ell$, and $s_\ell$ denote the Riemannian volume form on $M_\ell$, the $\pm$-parts of the Weyl tensor, the trace-free Ricci tensor, and the scalar curvature of $M_\ell$ (with its standard metric and the induced orientation), respectively. Then
\begin{align*}
\chi(M) &= \frac1{8\pi^2} \sum_{\ell=1}^N \int_{M_\ell} \big|W^+_\ell\big|^2 + \big|W^-_\ell\big|^2 - \frac12\big|\mathring{\Ric}_\ell\big|^2 + \frac{s_\ell^2}{24} d\mu_\ell \\
\sigma(M) &= \frac1{12\pi^2} \sum_{\ell=1}^N \int_{M_\ell} \big|W^+_\ell\big|^2 - \big|W^-_\ell\big|^2d\mu_\ell.
\end{align*}
\end{proposition}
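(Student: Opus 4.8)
The plan is to reduce the global statement about the closed manifold $M$ to a sum of Gauss--Bonnet--Chern and Hirzebruch-type formulas for the individual compact geometric pieces with boundary. The starting point is the standard Gauss--Bonnet--Chern integrand and the signature integrand for a closed oriented Riemannian $4$-manifold: for a \emph{closed} $(M,g)$ one has
\[
\chi(M)=\frac{1}{8\pi^2}\int_M \Big(|W^+|^2+|W^-|^2-\tfrac12|\mathring{\Ric}|^2+\tfrac{s^2}{24}\Big)\,d\mu,
\qquad
\sigma(M)=\frac{1}{12\pi^2}\int_M \big(|W^+|^2-|W^-|^2\big)\,d\mu.
\]
These are the classical curvature formulas (see Besse or LeBrun). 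The issue is that $M$ is assembled from pieces $M_\ell$ that carry their own locally homogeneous geometric metrics, and these metrics do not glue to a single smooth metric on $M$. So I cannot directly apply the closed formulas to one global metric; instead I want to say that $\chi$ and $\sigma$ are the sums over $\ell$ of the corresponding curvature \emph{integrals over each piece} with its own geometric metric.

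First I would invoke additivity. For $\chi$ the Euler characteristic is additive under gluing along $S$ via inclusion--exclusion, $\chi(M)=\sum_\ell \chi(M_\ell)-\chi(S\text{-contributions})$, but since $S$ is a closed orientable $3$-manifold it has $\chi(S)=0$, and more generally every gluing hypersurface is odd-dimensional with vanishing Euler characteristic, so the correction terms vanish and $\chi(M)=\sum_\ell \chi(M_\ell,\partial M_\ell)$ (relative Euler characteristic of each truncated piece). For $\sigma$ I would use Novikov additivity, exactly as in the proof of Theorem \ref{Treal}, to write $\sigma(M)=\sum_\ell \sigma(M_\ell)$. Thus both characteristic numbers split as sums of contributions from the truncated pieces.

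Next, the core step: for each compact geometric piece $M_\ell$ with boundary, I would apply the Gauss--Bonnet--Chern theorem with boundary and the Atiyah--Patodi--Singer signature theorem. These read
\[
\chi(M_\ell)=\frac{1}{8\pi^2}\int_{M_\ell}\!\Big(|W^+_\ell|^2+|W^-_\ell|^2-\tfrac12|\mathring{\Ric}_\ell|^2+\tfrac{s_\ell^2}{24}\Big)d\mu_\ell+\int_{\partial M_\ell}\!\Pi,
\qquad
\sigma(M_\ell)=\frac{1}{12\pi^2}\int_{M_\ell}\!\big(|W^+_\ell|^2-|W^-_\ell|^2\big)d\mu_\ell+\int_{\partial M_\ell}\!L-\eta(\partial M_\ell),
\]
where $\Pi$ is the boundary transgression form and $L$ the transgressed signature form. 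The whole point is that each geometric metric is \emph{product-like} near the cusp cross-sections (the cusps are modeled on warped products that become exactly cylindrical after truncation, or can be arranged to be so), so the second fundamental form of $\partial M_\ell$ vanishes and the boundary transgression terms $\Pi$ and $L$ vanish identically. Summing over $\ell$, the bulk curvature integrals add up to exactly the expressions in the Proposition. The $\eta$-terms, which are the only surviving boundary contributions in the signature formula, must cancel in the total sum: each gluing identifies two cross-sections with opposite induced orientations, and $\eta$ flips sign under orientation reversal, so the $\eta$-contributions cancel pairwise across the edges of the graph—this is precisely the mechanism already exploited in the proof of Theorem \ref{Treal}. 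Hence $\sigma(M)=\frac{1}{12\pi^2}\sum_\ell\int_{M_\ell}(|W^+_\ell|^2-|W^-_\ell|^2)\,d\mu_\ell$.

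The main obstacle I anticipate is justifying that the boundary transgression terms genuinely vanish, i.e.\ that the locally homogeneous geometric metrics are (or may be taken to be) exactly cylindrical/product near the truncating hypersurfaces. For the hyperbolic and complex-hyperbolic cusps this requires examining the standard cusp models: the metric is a warped product $dr^2+e^{-2r}g_{\text{flat}}$ (real case) or the analogous Heisenberg-nilmanifold warped product (complex case), whose second fundamental form on a level set is not literally zero but is proportional to the metric, so I must either verify that the transgression integrand still integrates to zero by symmetry of the homogeneous cross-section, or replace the genuine geometric metric by a metric that is isometric to it away from a collar and exactly cylindrical in the collar, checking that this modification changes neither $\chi$, $\sigma$, nor the bulk integrals appearing in the statement. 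I expect the cleanest route is the latter: homogeneity of the cross-sections forces the relevant boundary integrals of the transgression forms to vanish, and the $\eta$-cancellation argument then finishes the signature computation exactly as in Section \ref{real}.
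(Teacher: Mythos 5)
Your overall architecture (split $\chi$ and $\sigma$ into per-piece contributions, then cancel the boundary corrections pairwise across the gluing hypersurfaces using the orientation-reversal of $\eta$) matches the paper's, and the $\eta$-cancellation step is exactly right. The gap is in the core step, where you identify each piece's contribution with the curvature integral. Your claim that ``the second fundamental form of $\partial M_\ell$ vanishes and the boundary transgression terms $\Pi$ and $L$ vanish identically'' is false for the metrics the proposition is actually about: the cusp cross-sections of the geometric metrics are not totally geodesic. For a real-hyperbolic cusp $dr^2+e^{-2r}g_{\mathrm{flat}}$ the horospherical level sets are totally umbilic with $\mathrm{II}=\pm g$, and the Chern--Gauss--Bonnet transgression contains a term proportional to $\det(\mathrm{II})$, which is a nonzero constant multiple of the boundary volume form; no symmetry of the homogeneous cross-section makes its integral vanish. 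The transgression terms are nonzero at every finite truncation level and only decay to zero as the truncation parameter tends to infinity (because the cross-sections collapse), so what you need is a limiting statement, not a pointwise vanishing. Your fallback of replacing the metric by an exactly cylindrical one in a collar does not rescue the argument either: it proves an identity for a modified metric, whereas the integrands in the proposition ($|W^\pm_\ell|^2$, $|\mathring{\Ric}_\ell|^2$, $s_\ell^2$) are those of the \emph{standard} locally homogeneous metric on the \emph{complete} piece $M_\ell$, and a cylindrical metric over a flat cross-section has, e.g., different scalar curvature, so the bulk integrals genuinely change.

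The paper avoids all of this by not truncating at all: it integrates over the complete finite-volume pieces and invokes the Gauss--Bonnet theorem for complete finite-volume manifolds (Cheeger--Gromov) for $\chi$, and the Atiyah--Patodi--Singer index theorem for cuspidal metrics (Dai--Wei) for $\sigma$. In those theorems the limiting process you would need has already been carried out: the transgression terms have been pushed to infinity and disappear, and the only surviving boundary correction in the signature formula is $-\frac12\eta(\partial M_\ell)$, the limiting $\eta$-invariant. From there your cancellation argument finishes the proof exactly as in the paper. So to repair your write-up, either cite these finite-volume index theorems directly, or carry out the truncation-and-limit argument honestly: show the transgression integrals over the level-$r$ cross-sections tend to $0$ as $r\to\infty$, that the bulk integrals over the truncated pieces converge to the integrals over the complete pieces (the integrands are constant in each homogeneous piece and the volumes are finite), and that the boundary spectral term converges to the limiting $\eta$-invariant.
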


\begin{proof}
Since $M$ is oriented, $M_\ell$ has an induced orientation from $M$. The $\pm$-parts of the Weyl tensor get exchanged when we reverse orientation, so we need to be somewhat careful with the orientation on the model geometry we are using when computing the characteristic numbers above. In other words, each piece $M_\ell$ has a standard homogeneous metric but may or may not have the standard orientation. The first equality is a direct consequence of the Gauss--Bonnet theorem for complete finite volume $4$-manifolds (see for example \cite{CG85}) and the additivity of the Euler characteristic for even-dimensional manifolds (since $\chi(S) = 0$). 

The second identity is similar. The Atiyah--Patodi--Singer index theorem for cuspidal metrics \cite[Theorem 3.6]{DW07} tells us that for each geometric piece $M_\ell$ we have:
\[
\sigma(M_\ell) = \frac1{12\pi^2} \int_{M_\ell} \big|W^+_\ell\big|^2 - \big|W^-_\ell\big|^2d\mu_\ell - \frac12 \eta(\partial M_\ell),
\]
where $\eta$ denotes the limiting $\eta$-invariant of the boundary. Recall that $\eta$ is additive with respect to disjoint unions and changes sign when reversing orientation. Now observe that each component of $S$ is the boundary of exactly two components of $M_i$ and $M_j$ for some (possibly coinciding) $1\le i,j \le N$. However, the orientations induced on $S$ by $M_i$ and $M_j$ are opposite to each other, so their contributions to the $\eta$-invariant cancel out. That is, summing $\sigma(M_\ell)$ over all $\ell$, the summands with the $\eta$-invariants cancel out, leaving us with the desired expression.
\end{proof}

We now state three lemmas containing all the curvature computations required to apply Proposition~\ref{Pchiandsigma} in order to prove Theorem~\ref{Tgeometric}. To shorten up the notation, we write:
\begin{align*}
\Delta_-(M_\ell) := \chi^{(2)}(M_\ell) - 3\sigma^{(2)}(M_\ell) &= \frac1{8\pi^2}\int_{M_\ell} 3\big|W^-_\ell\big|^2 - \big|W^+_\ell\big|^2 - \frac12\big|\mathring{\Ric}_\ell\big|^2 + \frac{s_\ell^2}{24} d\mu_\ell,\\
\Delta_+(M_\ell) := \chi^{(2)}(M_\ell) + 3\sigma^{(2)}(M_\ell) &= \frac1{8\pi^2}\int_{M_\ell} 3\big|W^+_\ell\big|^2 - \big|W^-_\ell\big|^2 - \frac12\big|\mathring{\Ric}_\ell\big|^2 + \frac{s_\ell^2}{24} d\mu_\ell.
\end{align*}
Here $\chi^{(2)}$ and $\sigma^{(2)}$ denote the $L^2$-Euler characteristic and $L^2$-signature, respectively.

\begin{lemma}\label{LnoWeyl}
The geometries $\IH^2\times\IH^2$, $\IH^4$, $\IH^3\times \IE^1$, $\IH^2\times \IE^2$, and $\widetilde{\IS\IL}\times \IE^1$ have all $|W^+| = |W^-|$.
\end{lemma}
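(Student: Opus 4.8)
The plan is to show that each of these five geometries is locally symmetric, and that for each one the self-dual and anti-self-dual parts of the Weyl tensor have equal pointwise norm. Since the relevant metrics are locally homogeneous, the Weyl tensor (and hence $|W^\pm|$) is constant in space, so it suffices to compute these norms at a single point of each model space. I would organize the argument by exploiting two sources of symmetry: the product structure of the reducible geometries, and an orientation-reversing isometry when one is available.

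First I would dispose of the geometries that admit an orientation-reversing isometry preserving the metric. If a geometry $X$ carries such an isometry $f$, then $f$ sends $W^+$ to $W^-$ and vice versa (reversing orientation swaps the decomposition of $2$-forms into self-dual and anti-self-dual), so $|W^+| = |f^*W^+| = |W^-|$ at every point. The geometries $\IH^4$, $\IH^3 \times \IE^1$, and $\IH^2 \times \IE^2$ all admit orientation-reversing isometries: $\IH^4$ through any hyperplane reflection, and the product geometries through a reflection in a Euclidean factor (or, for $\IH^3 \times \IE^1$, a reflection of the $\IE^1$ factor). This immediately yields $|W^+| = |W^-|$ for those three cases.

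The two remaining cases, $\IH^2 \times \IH^2$ and $\widetilde{\IS\IL} \times \IE^1$, need a direct computation of the Weyl tensor. For a Riemannian product $X_1 \times X_2$ of surfaces (or, in the $\widetilde{\IS\IL}\times\IE^1$ case, of a $3$-manifold with a line), one computes the full curvature tensor from the curvatures of the factors, extracts the Ricci and scalar parts, and then isolates the Weyl tensor $W = \mathrm{Rm} - (\text{Ricci and scalar terms})$. For $\IH^2 \times \IH^2$, both factors have constant curvature $-1$; the product is Einstein, and by symmetry between the two factors one expects the Weyl tensor to be invariant (up to sign) under the isometry that swaps the two $\IH^2$ factors. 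Since swapping two oriented surface factors in a $4$-manifold is an orientation-preserving map but interchanging one factor's orientation is orientation-reversing, I would look for the precise symmetry that exchanges $W^+$ and $W^-$: reversing the orientation of exactly one $\IH^2$ factor is an orientation-reversing isometry of $\IH^2\times\IH^2$, which again forces $|W^+| = |W^-|$. The same device handles $\widetilde{\IS\IL}\times\IE^1$: reversing the $\IE^1$ factor is an orientation-reversing isometry, so $|W^+|=|W^-|$ there as well.

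In fact this observation collapses all five cases into the single principle that each geometry admits an orientation-reversing isometry, which is the cleanest route and avoids curvature bookkeeping entirely. The main obstacle is therefore simply to exhibit, for each of the five model spaces, an explicit orientation-reversing self-isometry — straightforward for $\IH^4$ and for any product with a Euclidean factor, and for $\IH^2\times\IH^2$ achieved by reversing one factor. I would write the proof in this unified form, remarking that $|W^\pm|$ is pointwise constant by homogeneity and that an orientation-reversing isometry swaps $W^+$ with $W^-$, and then list the isometry for each geometry.
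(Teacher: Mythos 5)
Your final, unified argument --- that each of the five geometries admits an orientation-reversing isometry (a hyperplane reflection in $\IH^4$, or reflection in a hyperbolic or Euclidean factor for the products), and that such an isometry swaps $\Lambda^+$ with $\Lambda^-$ while preserving the norms of the Weyl operators --- is exactly the paper's proof. The preliminary detour through direct curvature computations for $\IH^2\times\IH^2$ and $\widetilde{\IS\IL}\times\IE^1$ is unnecessary, as you yourself note, so the proposal is correct and essentially identical to the paper's.
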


\begin{proof}[Sketch of proof]
Each of these geometries admits an orientation-reversing isometry, as both $\IH^n$ and $\IE^n$ do, and each of them has a hyperbolic or a flat direction. Since reversing the orientation of a 4-manifold $M$ swaps the two bundles $\Lambda^\pm M$ but does not change the norms of the Weyl operators, we immediately obtain that $|W^+| = |W^-|$.
\end{proof}

\begin{lemma}[Hirzebruch proportionality]\label{LH2C}
If $M_\ell$ is of type $\IH^2(\IC^2)$, then $\Delta_-(M_\ell) = 0$.
\end{lemma}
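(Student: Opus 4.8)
The plan is to exploit the fact that a complex-hyperbolic surface is locally isometric to complex hyperbolic space $\IH^2(\IC)$, which is a \emph{K\"ahler--Einstein} manifold. Being Einstein means the trace-free Ricci tensor $\mathring{\Ric}_\ell$ vanishes identically, so two of the four terms in the integrand defining $\Delta_-(M_\ell)$ simplify at once. The remaining task is to pin down the pointwise relation between $|W^+|^2$, $|W^-|^2$, and $s^2$ for this geometry. First I would recall the standard Riemannian feature of K\"ahler $4$-manifolds: with the orientation induced by the complex structure, the self-dual Weyl tensor $W^+$ is completely determined by the scalar curvature, and one has the pointwise identity $|W^+|^2 = s^2/24$ (equivalently $W^+$ has constant eigenvalues $s/12, -s/6, -s/6$ on $\Lambda^+$). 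This is where the orientation bookkeeping from Proposition~\ref{Pchiandsigma} matters: the convention must be that $W^+$ is the self-dual part for the complex orientation.

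Next I would substitute these facts into the definition of $\Delta_-$. With $\mathring{\Ric}_\ell = 0$ and $|W^+_\ell|^2 = s_\ell^2/24$, the integrand
\[
3|W^-_\ell|^2 - |W^+_\ell|^2 - \tfrac12|\mathring{\Ric}_\ell|^2 + \tfrac{s_\ell^2}{24}
\]
collapses to $3|W^-_\ell|^2$, since the $-|W^+_\ell|^2$ and $+s_\ell^2/24$ terms cancel. So it remains only to show that $|W^-_\ell|^2 = 0$, i.e. that the anti-self-dual Weyl tensor vanishes for the complex-hyperbolic geometry with the complex orientation. The cleanest route is to observe that $\IH^2(\IC)$ is a complex space form of constant holomorphic sectional curvature; its curvature tensor is entirely built from the metric and the K\"ahler form, and a direct check shows the anti-self-dual part of the Weyl tensor is zero. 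Indeed $\IH^2(\IC)$ is self-dual (more precisely, it is anti-self-dual for the opposite orientation, or self-dual for the complex orientation, depending on sign conventions), so with the complex orientation $W^- \equiv 0$.

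A slicker, essentially computation-free alternative is to invoke Hirzebruch proportionality directly, as the lemma's name suggests: for a ball quotient, the $L^2$-characteristic numbers satisfy $\chi^{(2)} = 3\sigma^{(2)}$ because the Chern numbers of a complex-hyperbolic surface obey $c_1^2 = 3c_2$ (equivalently, the Bogomolov--Miyaoka--Yau equality is saturated by ball quotients), and in dimension $4$ one has $\chi = c_2$ and $\sigma = \tfrac13(c_1^2 - 2c_2)$. Translating into the normalization of $\chi^{(2)}$ and $\sigma^{(2)}$ used here gives $\Delta_-(M_\ell) = \chi^{(2)}(M_\ell) - 3\sigma^{(2)}(M_\ell) = 0$ immediately. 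I would likely present the curvature computation as the main argument and mention the Chern-number version as the conceptual reason. The only genuine subtlety—and the step most prone to sign errors—is getting the orientation convention consistent: one must verify that the orientation making $W^+$ scalar-curvature-determined (the complex orientation) is the same one for which $\Delta_-$ rather than $\Delta_+$ vanishes, so that the self-dual/anti-self-dual labels in the definitions of $\Delta_\pm$ match the K\"ahler convention.
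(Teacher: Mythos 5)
Your proof is correct, but note that the paper itself offers no proof of this lemma: it is stated under the name ``Hirzebruch proportionality'' and treated as known, with the implicit justification being exactly your second, ``computation-free'' alternative (the Chern-number identity $c_1^2 = 3c_2$ for ball quotients, extended to the finite-volume setting via Mumford's proportionality, which the paper invokes explicitly in Section~5). Your main argument -- the pointwise curvature computation -- is therefore a genuinely different and more self-contained route: since $\Delta_-$ is \emph{defined} here as a curvature integral, showing that the integrand $3|W^-|^2 - |W^+|^2 - \tfrac12|\mathring{\Ric}|^2 + \tfrac{s^2}{24}$ vanishes identically on $\IH^2(\IC)$ (Einstein, so $\mathring{\Ric}=0$; K\"ahler, so $|W^+|^2 = s^2/24$; self-dual for the complex orientation, so $W^-=0$) proves the lemma directly without passing through compactifications or $L^2$-index theory, and it makes the orientation dependence -- which the paper only addresses later, in Remark~\ref{rmk:chi=3sigma} -- explicit. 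Two small points: your parenthetical eigenvalues of $W^+$ should be $s/6, -s/12, -s/12$ rather than $s/12, -s/6, -s/6$ (the latter would give $|W^+|^2 = s^2/16$, contradicting the identity $|W^+|^2 = s^2/24$ that you correctly use); and in the Chern-number version one should be careful that for cusped pieces the relevant statement is the proportionality of \emph{log}-Chern numbers, not of the Chern numbers of a closed surface. Neither issue affects the validity of your main argument.
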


\begin{lemma}\label{LF4}
The metric on $\IF^4$ satisfies:
\[
\big|W^\pm_{\IF^4}\big|^2 = \frac38, \qquad \big|\mathring{\Ric}_{\IF^4}\big| = \frac94, \qquad s_{\IF^4} = -3.
\]
\end{lemma}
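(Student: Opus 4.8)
The plan is to verify the three identities by a direct computation of the curvature of the standard homogeneous metric on $\IF^4$, normalized so that $s=-3$. Recall that $\IF^4$ is modeled on $\IH^2\times\IR^2$, realized as the flat $\IR^2$-bundle over the hyperbolic plane attached to the standard representation of $\mathrm{SL}(2,\IR)$ (equivalently, as a left-invariant metric on the solvable group $\mathrm{Aff}^+(\IR)\ltimes\IR^2$); its isometry group is $\mathrm{SL}(2,\IR)\ltimes\IR^2$, and its cusp cross-sections are Heisenberg nilmanifolds, which is exactly what allows $\IF^4$-pieces to be glued to complex-hyperbolic pieces. Writing $\IH^2$ in the upper half-plane model with fibre coordinates $(u,v)$, the invariant metric $g=(e^0)^2+(e^1)^2+(e^2)^2+(e^3)^2$ admits the orthonormal coframe
\[
e^0=\frac{dx}{y},\qquad e^1=\frac{dy}{y},\qquad e^2=\frac{du-x\,dv}{\sqrt y},\qquad e^3=\sqrt y\,dv.
\]
The only genuinely computational input is the set of exterior derivatives
\[
de^0=e^0\wedge e^1,\qquad de^1=0,\qquad de^2=-\tfrac12\,e^1\wedge e^2-e^0\wedge e^3,\qquad de^3=\tfrac12\,e^1\wedge e^3.
\]

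From here the Levi--Civita connection is forced by Cartan's first structure equation $de^i=-\omega^i{}_j\wedge e^j$ with $\omega_{ij}=-\omega_{ji}$; equivalently one reads off the structure constants of the dual frame $\{e_i\}$, namely $[e_0,e_1]=-e_0$, $[e_0,e_3]=e_2$, $[e_1,e_2]=\tfrac12 e_2$, $[e_1,e_3]=-\tfrac12 e_3$ (the remaining brackets vanishing), and applies Koszul's formula. Next I would compute the full Riemann tensor, either from $\Omega^i{}_j=d\omega^i{}_j+\omega^i{}_k\wedge\omega^k{}_j$ or by evaluating $R(e_i,e_j)e_k$ directly. The sectional curvatures come out to $K(e_0,e_1)=-1$, $K(e_0,e_2)=K(e_0,e_3)=K(e_1,e_2)=K(e_1,e_3)=-\tfrac14$ and $K(e_2,e_3)=\tfrac12$, so the Ricci tensor is diagonal in this frame with eigenvalues $-\tfrac32,-\tfrac32,0,0$. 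This gives $s=-3$, and subtracting $\tfrac{s}{4}g$ leaves the trace-free Ricci eigenvalues $-\tfrac34,-\tfrac34,\tfrac34,\tfrac34$, whence $\big|\mathring{\Ric}\big|^2=4\cdot\tfrac{9}{16}=\tfrac94$.

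For the Weyl pieces I would decompose $\Lambda^2=\Lambda^+\oplus\Lambda^-$ using $e^{01}\pm e^{23}$, $e^{02}\pm e^{31}$, $e^{03}\pm e^{12}$, restrict the curvature operator $\mathcal R$ to each summand, and subtract $\tfrac{s}{12}\,\mathrm{Id}$ from each diagonal block to obtain $W^\pm$. The computation produces $W^+$-eigenvalues $\{\tfrac12,-\tfrac14,-\tfrac14\}$ and $W^-$-eigenvalues $\{-\tfrac12,\tfrac14,\tfrac14\}$, both of squared norm $\tfrac14+2\cdot\tfrac1{16}=\tfrac38$, so $|W^+|^2=|W^-|^2=\tfrac38$; in particular the self-dual and anti-self-dual parts agree, and $\IF^4$ contributes nothing to the signature density (consistently with $\chi=\sigma=0$ for closed $\IF^4$-manifolds). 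The step most prone to error is this last one: one must fix the orientation and the sign convention relating $\mathcal R$ to sectional curvature consistently, and remember that the off-diagonal block of $\mathcal R$ is precisely the trace-free Ricci part. Two cheap internal checks make this safe: the trace of each $\Lambda^\pm$-block must equal $\tfrac{s}{4}=-\tfrac34$, and the norm of the off-diagonal block must reproduce $\big|\mathring{\Ric}\big|^2=\tfrac94$; the characteristic $\{2\mu,-\mu,-\mu\}$ shape of $W^+$ is a further consistency feature one expects from the compatible complex structure carried by $\IF^4$.
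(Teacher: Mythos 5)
Your proposal is correct and is essentially the paper's proof: both verify the lemma by directly computing the curvature of the same invariant metric on $\IF^4$ (your orthonormal coframe reproduces the metric $\frac{dx^2+dy^2}{y^2}+\frac{(x^2+y^2)dp^2+2x\,dp\,dq+dq^2}{y}$ up to the isometry $x\mapsto -x$), the only difference being that you carry out the computation by hand via the solvable Lie-group structure and the Koszul formula, whereas the paper delegates it to SageManifolds; your route has the advantage of being checkable without software, and I have verified your structure constants, sectional curvatures, and the resulting Ricci and Weyl data. One point worth recording: your Weyl eigenvalues $\{\frac12,-\frac14,-\frac14\}$ for $W^+$ and $\{-\frac12,\frac14,\frac14\}$ for $W^-$ are the trace-free ones, whereas the eigenvalues $\{\frac12,\frac14,-\frac14\}$ printed in the paper's proof sum to $\frac12\neq 0$ and so cannot literally be the eigenvalues of a Weyl operator (nor of the curvature operator blocks, whose trace is $s/4=-\frac34$); this looks like a sign typo in the paper, and it is harmless because every sign pattern here yields the same squared norm $\frac14+\frac1{16}+\frac1{16}=\frac38$, which is all the lemma asserts.
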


\begin{proof}[Proof of Lemma~\ref{LF4}]
The metric on $\IF^4$ is given, in coordinates $(x,y,p,q)$ with $y > 0$, by\footnote{We note here that there are two typos in~\cite[Page~8]{Bernt}. The first is a sign $-$ before $dy^2$. The second is more subtle, and is the summand $2y\,dp\,dq$ instead of $2x\,dp\,dq$. The mistake is in the passage from complex coordinates to real coordinates.}
\[
g_{\IF^4} = \frac{dx^2 + dy^2}{y^2} + \frac{(x^2+y^2)dp^2 + 2x\,dp\,dq + dq^2}y.
\]
Using the Sage~\cite{sagemath} package SageManifolds~\cite{SageManifolds}, we computed the scalar curvature to be $-3$. We also computed the Weyl tensors, viewed as operators $\Lambda^\pm M \to \Lambda^\pm M$, at the point $(0,1,0,0)$. At this point, the metric is $dx^2 + dy^2 + dp^2 + dq^2$, and in the bases
\[
(dx \wedge dy \pm dp \wedge dq,\quad dx \wedge dp \mp dy \wedge dq,\quad dx\wedge dq \pm dy\wedge dq)
\]
of $\Lambda^\pm M$, the Weyl tensors are both diagonal with eigenvalues $\frac12, \frac14, -\frac14$. It follows that the squared norms of the Weyl tensors at $(0,1,0,0)$ (and therefore on all of $\IF^4$, by homogeneity) is $\big(\frac12\big)^2 + \big(\frac14\big)^2 + \big({-\frac14}\big)^2  = \frac38$, as claimed. Finally, the trace-free Ricci tensor at $(0,1,0,0)$ is $-\frac34 (dx^2 + dy^2) + \frac34(dp^2 + dq^2)$, so that its norm is $4\big({-\frac34}\big)^2 = \frac94$.
\end{proof}

We are now in position to prove the inequality in Theorem~\ref{Tgeometric}.

\begin{proof}[Proof of Equation~\eqref{E3sigma}]
When $M$ is geometric, this is a result of Wall and Kotschik~\cite{Wal86, Kot92}.
So let us assume that $M$ is not geometric, i.e., we assume $S \neq \varnothing$. Thanks to Proposition~\ref{Pchiandsigma}, it suffices to prove that $\Delta_\pm(M_\ell) \ge 0$ holds for each geometric piece $M_\ell$ in the decomposition.

When the piece has a metric of type $\IH^2\times\IH^2$, $\IH^4$, $\IH^3\times \IE^1$, $\IH^2\times \IE^2$, or $\widetilde{\IS\IL}\times \IE^1$, Lemma~\ref{LnoWeyl} implies that the $L^2$-signature of $M_\ell$ vanishes. When the metric is of type $\IF^4$, by Lemma~\ref{LF4}, the $L^2$-signature of $M_\ell$ also vanishes.

By~\cite[Section~7]{Hil02}, the Euler characteristic is non-negative for all pieces of type $\IH^2\times\IH^2$, $\IH^4$, $\IH^3\times \IE^1$, $\IH^2\times \IE^2$, $\widetilde{\IS\IL}\times \IE^1$, or $\IF^4$, so the inequalities $\Delta_\pm(M_\ell) \ge 0$ are verified as $\sigma^{(2)}(M_\ell) = 0$ and $\chi^{(2)}(M_\ell)=\chi(M_\ell) \ge 0$. In the remaining case, that of $\IH^2(\IC)$, Hirzebruch's proportionality (Lemma~\ref{LH2C}) tells us that $\Delta_-(M_\ell) = 0$. Since $\chi^{(2)}(M_\ell) > 0$, we have $\sigma^{(2)}(M_\ell) > 0$ and so $\Delta_+(M_\ell) > 0$. To summarize, for every geometric piece $M_\ell$ in the geometric decomposition, we have $\Delta_\pm(M_\ell) \ge 0$. So, by Proposition~\ref{Pchiandsigma}, the inequality $\chi(M) \ge |3\sigma(M)|$ holds.
\end{proof}

\begin{remark}
That pieces of type $\IF^4$ do not contribute to the signature and Euler characteristic of the total manifold also follows from the fact that they have a finite cover that is a flat $T^2$-bundle over an open hyperbolic surface. The metric on a $T^2$ can be shrunk through flat metrics to have arbitrarily small volume, so the contribution of the integrals given by the $\IF^4$ pieces can be made arbitrarily small, and in particular it has to vanish (by integrality of $\sigma$ and $\chi$, for instance).
\end{remark}

\begin{remark}\label{rmk:chi=3sigma}
From the proof of the Inequality~\eqref{E3sigma}, we see that equality can only be attained if either $\chi = 0$ or $\chi , \sigma > 0$ (respectively, $\sigma < 0)$, all pieces of the geometric decomposition are of type $\IF^4$ or $\IH^2(\IC)$, and all pieces of the latter type have the \emph{complex} (resp., the \emph{anti-complex}) orientation. This will be used in Section~\ref{complex}.
\end{remark}

\section{Non-geometric $4$-manifolds with $\chi = 3\sigma>0$}\label{Schi=3sigma}
The main goal of this section is to exhibit infinitely many examples of non-complex, closed, aspherical 4-manifolds $M$ with $\chi(M) = 3\sigma(M)>0$. Our construction relies on the $\IH^2(\IC)$ and $\IF^4$ geometries, and it produces examples of geometrically decomposable aspherical $4$-manifolds as in class (5) of Theorem \ref{THillman}.

In~\cite[Chapter~13.3]{Hil02}, Hillman produces an example $X_0$ of a $4$-manifold with geometry $\IF^4$, whose double is a torus bundle over a genus-$2$ surface. The $4$-manifold $X_0$ is a $T^2$-bundle over a once-punctured torus $T_*$. Call $\alpha$ and $\beta$ the generators of\footnote{We omit base-points to make the notation lighter, but this will cause no harm.} $\pi_1(T_*)$, whose corresponding homology classes are a positive basis of $H_1(T_*)$. On $X_0$, the monodromy of the $T^2$-bundle along $\alpha$ is $X = {\tiny \left(\begin{array}{@{}c@{\ }c@{}}2 & 1\\1 & 1\end{array}\right)} \in SL_2(\IZ)$, and along $\beta$ it is $Y = {\tiny \left(\begin{array}{@{}c@{\ }c@{}}1 & 1\\1 & 2\end{array}\right)} \in SL_2(\IZ)$. The monodromy around the puncture (with the orientation induced by the interior of $T^*$, and \emph{not} by the neighborhood of the puncture) is given by the commutator $[X,Y^{-1}] = {\tiny \left(\begin{array}{@{}c@{\ }c@{}}-1 & -6\\0 & -1\end{array}\right)}$. That is to say, the boundary of the cusp of $X_0$ is a circle bundle over a Klein bottle.

Consider the quadruple (non-normal) cover $p\colon S_{**} \to T_*$ from a twice-punctured genus-$2$ surface $S_{**}$ to $T_*$. This cover is obtained by composing two double covers as follows (see Figure~\ref{f:cover}):

\begin{itemize}
\item[(i)] a cover $T_{**} \to T_*$ from a twice-punctured torus $T_{**}$ to $T_*$ induced by the restriction of a double cover of $T$ (the one-point compactification of $T_*$);
\item[(ii)] a cover $S_{**} \to T_{**}$ which extends to ramified cover $S \to T$ with branching locus equal to the two punctures. More precisely, the two loops around the punctures represent the same non-trivial class in $H_1(T_{**}; \IF_2)$, so there is a double cover of $T_{**}$ that non-trivially covers each of them.
\end{itemize}

\begin{figure}[h]
\centering
\includegraphics[scale=0.6]{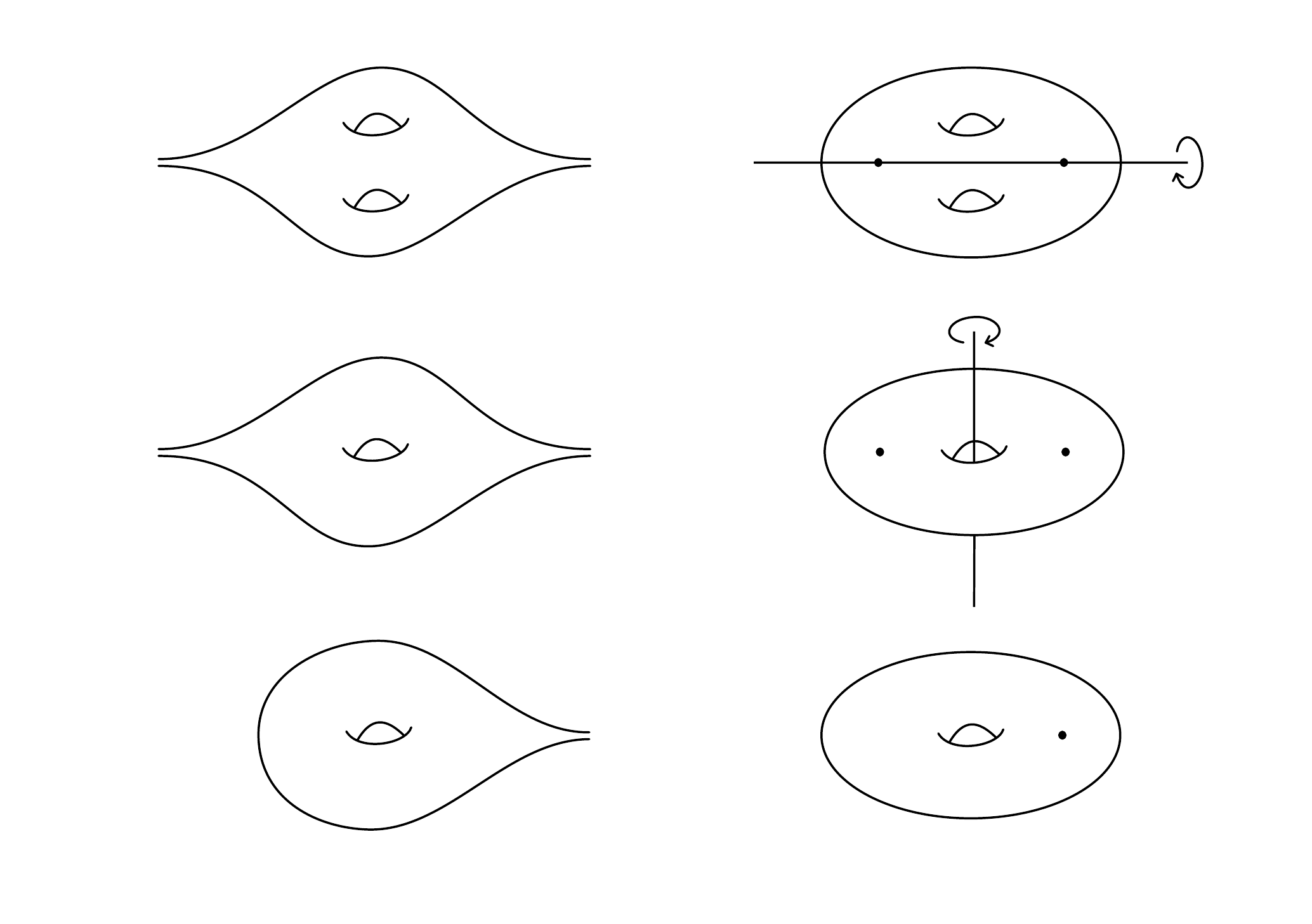}
\caption{A quadruple cover $S_{**} \to T_*$. On the left, from top to bottom $S_{**}$, $T_{**}$, and $T_*$. On the right their compactifications, $S$, $T$, and $T$.}\label{f:cover}
\end{figure}

We now pick any cover $p$ fitting the description above.

\begin{remark}
No cover $p$ as above can be normal. If it were, it would be Abelian, since groups of order $4$ are Abelian. Abelian covers correspond to maps of the fundamental group of the base that factor through the Abelianisation. Since the loop around the puncture in $T_*$ is null-homologous, it is trivially covered in each Abelian cover.
\end{remark}

Call $X \to S_{**}$ the pull-back of the bundle $X_0 \to T_*$ to $S_{**}$ via $p$. Notice that the first cover creates two cusps with the same ideal boundary. The second cover non-trivially double covers the cusps, by squaring the monodromy around them. That is, the boundaries of the two cusps of $X$ are both $T^2$-bundles over $S^1$ with monodromy ${\tiny \left(\begin{array}{@{}c@{\ }c@{}}-1 & -6\\0 & -1\end{array}\right)}^2 = {\tiny \left(\begin{array}{@{}c@{\ }c@{}}1 & 12\\0 & 1\end{array}\right)}$. In particular, they are both circle bundles over $T^2$ with Euler number $12$.

In~\cite[Section 4]{DG23}, for each $n$ we constructed a complex-hyperbolic $4$-manifold $H_n$ with $\chi(H_{n})=n$ whose ideal boundary consists of four circle bundles over $T^2$ with Euler number $n$. This is the case with respect to the orientation coming from the complex structure. Next, we select $H_{12}$ and we fix a positive integer $m$. We can glue $m$ copies of $H_{12}$ to $2m$ copies of $X$ along their boundary to obtain a closed and \emph{connected} $4$-manifold $M_m$. We pair boundary components with opposite orientation in order to get a closed, oriented $4$-manifold $M_m$. In other words, we equip the complex hyperbolic pieces with the standard orientation, and we reverse the orientation of the pieces coming from copies of $X$. Note that the diffeomorphism class of $M_m$ depends on the particular choice of gluings. 

\begin{proposition}\label{Pequality}
For each $m$, $\chi(M_m) = 3\sigma(M_m) = 12m$. Moreover, for each $m$, $M_m$ does not support a complex structure and is not geometric.
\end{proposition}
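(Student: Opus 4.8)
The plan is to separate the statement into the numerical identity $\chi(M_m) = 3\sigma(M_m) = 12m$, which I would settle by additivity together with the curvature formulas of Proposition~\ref{Pchiandsigma}, and the two qualitative claims (no complex structure, non-geometric), which I would settle by a Bogomolov--Miyaoka--Yau rigidity argument coupled with a fundamental-group obstruction.

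For the Euler characteristic, I would use that $\chi$ is additive across the gluing locus, since each component of it is a closed $3$-manifold and hence has vanishing Euler characteristic. Each copy of $X$ is a $T^2$-bundle over the twice-punctured genus-$2$ surface $S_{**}$, so $\chi(X) = \chi(T^2)\,\chi(S_{**}) = 0$ (consistently, the $\IF^4$-integrand for $\chi$ in Proposition~\ref{Pchiandsigma} vanishes by Lemma~\ref{LF4}). Since $\chi(H_{12}) = 12$, additivity yields $\chi(M_m) = 12m + 2m\cdot 0 = 12m$. For the signature I would invoke Proposition~\ref{Pchiandsigma}: the $2m$ copies of $X$ contribute nothing because $|W^+_{\IF^4}| = |W^-_{\IF^4}|$ by Lemma~\ref{LF4}, while each of the $m$ copies of $H_{12}$, carrying the complex orientation, contributes $\sigma^{(2)}(H_{12}) = \tfrac13\,\chi^{(2)}(H_{12}) = 4$ by Hirzebruch proportionality (Lemma~\ref{LH2C}). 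Hence $\sigma(M_m) = 4m$ and $3\sigma(M_m) = 12m = \chi(M_m)$.

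For the qualitative claims I would argue by contradiction, assuming $M_m$ admits a complex structure. Since $M_m$ is aspherical (it is an aspherical union, via Whitehead's theorem, of the aspherical and $\pi_1$-injective pieces $H_{12}$ and $X$), it can contain no embedded $(-1)$-sphere, so as a complex surface it is minimal. Rewriting $\chi = 3\sigma$ in Chern numbers gives $c_2 = \chi = 12m > 0$ and $c_1^2 = 2\chi + 3\sigma = 36m > 0$, so $c_1^2 = 3c_2$. A minimal aspherical surface with $c_1^2 > 0$ must be of general type (Kodaira dimension $0$ and $1$ force $c_1^2 = 0$, and $\kappa = -\infty$ surfaces are not aspherical), so by Yau's theorem — the equality case of the Bogomolov--Miyaoka--Yau inequality — $M_m$ is a compact complex-hyperbolic surface $\Gamma\backslash\IH^2(\IC)$. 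But $M_m$ is glued along Heisenberg nilmanifolds (circle bundles over $T^2$ with Euler number $12$), which are $\pi_1$-injective, so $\pi_1(M_m)$ contains a copy of $\IZ^2$. This contradicts the fact that a closed negatively curved manifold has word-hyperbolic fundamental group, in which every abelian subgroup is cyclic (equivalently, by Preissmann's theorem). Hence $M_m$ carries no complex structure. The non-geometric claim then follows at once: by Remark~\ref{rmk:chi=3sigma}, or by a direct check against the $19$ geometries, the only aspherical $4$-geometry with $\chi = 3\sigma > 0$ is $\IH^2(\IC)$, whose closed quotients are complex, so a geometric $M_m$ would be complex, contradicting what we just proved.

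The step I expect to be the main obstacle is the "not complex" claim, and specifically pinning down that a complex structure would force a ball quotient. This requires the asphericity-to-minimality reduction and a careful invocation of the equality case of the Bogomolov--Miyaoka--Yau inequality; once $M_m$ is identified with a closed complex-hyperbolic surface, the $\IZ^2$ subgroup coming from the nilmanifold gluing loci closes the argument cleanly, and the non-geometric statement is then essentially a corollary.
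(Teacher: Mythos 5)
Your proposal is correct and follows essentially the same route as the paper: additivity plus Hirzebruch proportionality for the numerical identity, and a reduction of both the complex and geometric cases to a compact ball quotient, ruled out by the $\pi_1$-injective $\IZ^2$ in a gluing hypersurface via Preissmann. The only substantive difference is that you spell out the minimality and general-type reduction needed before invoking the equality case of Bogomolov--Miyaoka--Yau, a step the paper leaves implicit when citing Yau.
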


\begin{proof}
We have $\chi(M_m) = 3\sigma(M_m)$ because:
\begin{align*}
\chi(M_m) &= m\chi^{(2)}(H_{12}) + 2m\chi^{(2)}(X) = m\chi^{(2)}(H_{12}) = 12m,\\
3\sigma(M_m) &= m\cdot 3\sigma^{(2)}(H_{12}) + 2m\cdot 3\sigma^{(2)}(X) = 3m\sigma^{(2)}(H_{12}),
\end{align*}
and, by Hirzebruch's proportionality, $\sigma^{(2)}(H_{12})=4$.

Next we show that $M_m$ is neither geometric nor complex. Choose a cusp of a copy of $H_{12}$ in $M_m$, and call $B$ the corresponding $3$-manifold in $M_m$. $B$ is $\pi_1$-injective in $M_m$ (see~\cite[Chapter~7]{Hil02}) and $\pi_1(B)$ contains a copy of $\IZ^2$, since it is a $T^2$-bundle over a circle.
If $M_m$ were complex or geometric, then it would be a ball quotient:
\begin{itemize}
\item if $M_m$ is complex, it saturates the Bogomolov--Miyaoka--Yau inequality, so it is a ball quotient by \cite{Yau77};
\item if $M_m$ is geometric, it either has the geometry of type $\IP^2(\IC)$ or of type $\IH^2(\IC)$, by~\cite[Theorem~6.1]{Wal86}. Since it is aspherical, it has to be the latter.
\end{itemize}

However, fundamental group of compact ball quotients do not contain non-cyclic Abelian subgroups by Preissmann~\cite{Preissmann} (see, for example,~\cite[Chapter~12]{doC92}), so $M_m$ cannot be a ball quotient, and in particular it is neither geometric nor complex.
\end{proof}

There are plenty more examples one can construct, for instance by taking higher covers of $S_{**}$, and gluing complex-hyperbolic 4-manifolds with cusps whose Euler numbers are multiples of $12$. \\

We conclude by sketching another construction based on the second example in~\cite[Chapter~13.3]{Hil02}. Hillman constructs a 4-manifold $X_0'$ of type $\IF^4$ that is a $T^2$-bundle over a pair-of-pants $P$. The monodromy maps around the boundary components are represented by the matrices (in Hillman's notation)
\[
V = {\tiny \left(\begin{array}{@{}c@{\ }c@{}}1 & 2\\0 & 1\end{array}\right)},\quad  U = {\tiny \left(\begin{array}{@{}c@{\ }c@{}}1 & 0\\-2 & 1\end{array}\right)}, \quad VU^{-1} = {\tiny \left(\begin{array}{@{}c@{\ }c@{}}-3 & 2\\-2 & 1\end{array}\right)} \sim {\tiny \left(\begin{array}{@{}c@{\ }c@{}}-1 & 3\\0 & -1\end{array}\right)}.
\]
Similarly as above we find a degree-4 non-normal cover $P_* \to P$, where $P_*$ is a six-punctured sphere, of the pair-of-pants that squares all monodromies around the punctures.
This cover comes from two double covers of the $2$-sphere onto itself, each branched over two points: first we branch over the first two boundary components of $P$, and then over the two preimages of the other component. The net effect is that we are squaring the monodromy around each boundary component, and doubling each boundary component.
Pulling back the torus bundle to $P_*$, we get a $T^2$-bundle $X'$ with a geometry of type $\IF^4$ with six punctures, where the monodromies are given by
\[
{\tiny \left(\begin{array}{@{}c@{\ }c@{}}1 & 4\\0 & 1\end{array}\right)},\quad  {\tiny \left(\begin{array}{@{}c@{\ }c@{}}1 & -4\\0 & 1\end{array}\right)}, \quad {\tiny \left(\begin{array}{@{}c@{\ }c@{}}1 & -6\\0 & 1\end{array}\right)},
\]
each twice.

We can now take $2m$ copies of $X'$ and glue the torus bundles with Euler number $-6$ to the boundaries of $m$ copies of the complex-hyperbolic $4$-manifold $H_6$ constructed in~\cite{DG23}. We are left with a $4$-manifold whose ends are $4m$ torus bundles, half of which have Euler number $-4$ and half $+4$. We can glue these boundaries together to obtain a closed, connected, geometrically decomposed, oriented $4$-manifold that we call $M'_m$. The same argument as above shows the following proposition.

\begin{proposition}\label{Pequality2}
For each $m$, $\chi(M'_m) = 3\sigma(M'_m) = 6m$. Moreover, for each $m$, $M'_m$ does not support a complex structure and is not geometric.
\end{proposition}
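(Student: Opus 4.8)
The plan is to run the same two-step argument as for Proposition~\ref{Pequality}, with the complex-hyperbolic piece $H_6$ in place of $H_{12}$ and the $\IF^4$-piece $X'$ in place of $X$. First I would compute the two characteristic numbers using the additivity provided by Proposition~\ref{Pchiandsigma}: since the $\eta$-invariants of paired cusps cancel, both $\chi(M'_m)$ and $\sigma(M'_m)$ split as sums of the $L^2$-Euler characteristics and $L^2$-signatures of the geometric pieces. The essential input is that every copy of $X'$ carries the geometry $\IF^4$; by Lemma~\ref{LF4} the pointwise Gauss--Bonnet integrand $|W^+|^2 + |W^-|^2 - \tfrac12|\mathring{\Ric}|^2 + \tfrac{s^2}{24}$ vanishes and $|W^+| = |W^-|$, so that $\chi^{(2)}(X') = \sigma^{(2)}(X') = 0$ and the $\IF^4$-pieces contribute nothing. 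It then remains to account for the $m$ copies of $H_6$. As $H_6$ is a truncated complex-hyperbolic surface, Hirzebruch proportionality (Lemma~\ref{LH2C}) gives $\chi^{(2)}(H_6) = 3\sigma^{(2)}(H_6)$; together with $\chi^{(2)}(H_6) = \chi(H_6) = 6$ from~\cite{DG23} this yields $\sigma^{(2)}(H_6) = 2$. Consequently
\[
\chi(M'_m) = m\,\chi^{(2)}(H_6) + 2m\,\chi^{(2)}(X') = 6m, \qquad 3\sigma(M'_m) = 3m\,\sigma^{(2)}(H_6) = 6m,
\]
where, exactly as in the previous construction, each $H_6$ is given its complex orientation and each $X'$ the reversed orientation, so that the signatures add with the correct sign.

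For the non-complex, non-geometric assertion I would reproduce the obstruction argument of Proposition~\ref{Pequality} essentially verbatim. Pick a cusp of one of the $H_6$-pieces and let $B \subset M'_m$ be the associated closed $3$-manifold; it is a $T^2$-bundle over a circle, so $\pi_1(B)$ contains a copy of $\IZ^2$, and $B$ is $\pi_1$-injective by~\cite[Chapter~7]{Hil02}. If $M'_m$ were complex it would saturate the Bogomolov--Miyaoka--Yau inequality (since $\chi = 3\sigma$) and hence be a ball quotient by~\cite{Yau77}; if it were geometric, then by~\cite[Theorem~6.1]{Wal86} together with asphericity it would carry the geometry $\IH^2(\IC)$, again a ball quotient. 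But by Preissmann's theorem the fundamental group of a closed ball quotient contains no non-cyclic abelian subgroup, contradicting the presence of a subgroup $\IZ^2 \subseteq \pi_1(M'_m)$. Hence $M'_m$ is neither complex nor geometric.

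The only genuinely new verification, and the step I would watch most carefully, is the gluing bookkeeping. One must check that the six cusps of each $X'$, with Euler numbers $4, 4, -4, -4, -6, -6$, together with the four Euler-number-$6$ cusps of each $H_6$, can be matched by orientation-reversing identifications into a single closed, connected, oriented manifold: the $4m$ Euler-$(-6)$ cusps of the $X'$-pieces pair with the $4m$ Euler-$6$ cusps of the $H_6$-pieces, and the remaining $4m$ Euler-$4$ cusps pair with the $4m$ Euler-$(-4)$ cusps. Since the relevant cusp counts agree, this is an elementary matching and I anticipate no real difficulty; the curvature computation and the Preissmann step are then identical to the already-established case of Proposition~\ref{Pequality}.
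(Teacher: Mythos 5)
Your proposal is correct and follows exactly the route the paper intends: the paper's own ``proof'' is literally the single sentence ``The same argument as above shows the following proposition,'' deferring to Proposition~\ref{Pequality}, and your write-up carries out that argument with the right substitutions ($H_6$ for $H_{12}$, $X'$ for $X$, $\sigma^{(2)}(H_6)=2$ by Hirzebruch proportionality, vanishing $\IF^4$ contributions via Lemma~\ref{LF4}, and the Preissmann/ball-quotient obstruction). Your explicit check of the cusp bookkeeping ($4m$ Euler-$(-6)$ cusps against $4m$ Euler-$6$ cusps, and $4m$ Euler-$4$ against $4m$ Euler-$(-4)$) matches the gluing described in the paragraph preceding the proposition.
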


\section{Signature of higher graph $4$-manifolds with complex-hyperbolic pieces}\label{complex}

In this section, we prove Theorem \ref{Tcomplex}. Finally, we prove that the inequality in Theorem \ref{Tcomplex} is sharp and, in doing so, we also construct the first examples of higher graph $4$-manifolds with non-vanishing signature.

Let $Y$ be a complex-hyperbolic surface with nilmanifold cusps. It is well-known that $Y$ admit a smooth toroidal compactification $(X, D)$ of log-general type~\cite[Proposition 4.7]{DiC12}. It is also well-known that $X$ is a projective smooth surface, and $D$ is a reduced divisor consisting of pairwise disjoint smooth elliptic curves with negative self-intersection. These elliptic divisors are in one-to-one correspondence with the cusps of $Y$. We can now explicitly compute the $\eta$-invariant of a torus-like cusp by applying Hirzebruch's proportionality principle in the non-compact setting.
By Hirzebruch's proportionality~\cite{Mum77}, we know that
\[
3\bar{c}_2(X, D)=\bar{c}^2_1(X, D),
\]
where the $\bar{c}_i$'s are the log-Chern numbers of the pair $(X, D)$. It is well-known (see for example \cite{DiC12}) that
\[
\bar{c}_2(X, D)=\chi(X\setminus D)=\chi(Y)>0,
\]
and
\[
\bar{c}^2_1(X, D)=c^2_1(X)-D^2=K^2_{X}-D^2=2\chi(X)+3\sigma(X)-D^2.
\]
Let $D=D_1\cup \dots \cup D_b$, so that $Y$ has exactly $b>0$ cusps. Moreover, say that
\[
D^2_i=-n_i,\quad i=1, \dots, b,
\]
where for each $i$, $n_i$ is a strictly positive integer. Since by Mayer--Vietoris $\chi(X)=\chi(Y)$, we obtain that
\[
\chi(Y)=3\sigma(X)+n_1+\dots+n_b,
\]
so that
\[
\sigma(X)=\frac{\chi(Y)}{3}+\frac{D^2}{3}.
\]
Thus, if by a slight abuse of notation we denote by $Y$ the truncated complex-hyperbolic surface with cusps, we have that the signature of this $4$-manifold is given by
\begin{align}\label{bsignature}
\boxed{\sigma(Y)=\sigma(X)+b=\frac{\chi(Y)}{3}-\sum^b_{i=1}\Big(\frac{n_i}{3}-1\Big)}.
\end{align}
This formula computes the $\eta$-invariant of a torus-like cusp in terms of the self-intersection of the associated compactifying divisor. Moreover, it implies the following. 

\begin{lemma}\label{orientation}
	Let $Y$ be a truncated complex-hyperbolic surface with infra-nilmanifold cusps equipped with the orientation coming from the complex orientation. Then each boundary component is oriented in such a way that is finitely covered by a circle bundle over a torus with positive Euler number.
\end{lemma}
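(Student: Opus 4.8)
The plan is to read off the orientation of each cusp cross-section directly from the toroidal compactification $(X,D)$, exploiting that the compactifying divisors have \emph{negative} self-intersection, which the computation above records as $D_i^2 = -n_i$ with each $n_i$ a strictly positive integer. I would first treat the case of genuine nilmanifold cusps and then reduce the general infra-nilmanifold case to it by passing to a finite cover.

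For the nilmanifold case, each cusp cross-section of $Y$ is diffeomorphic to the boundary of a tubular neighborhood $N_i$ of the corresponding elliptic curve $D_i \subset X$. This neighborhood is the unit disk bundle of the holomorphic normal bundle $\nu_i = N_{D_i/X}$, whose degree equals $D_i^2 = -n_i$. Hence $\partial N_i$, with the orientation induced as the boundary of $N_i$ and with base and fiber oriented by the complex structure, is the oriented circle bundle over $T^2 \cong D_i$ with Euler number $-n_i < 0$. The truncated surface is $Y = X \setminus \bigcup_i \Int(N_i)$, so as oriented manifolds the $i$-th cusp cross-section of $Y$ equals $\partial N_i$ with the reversed orientation, since the outward normal of $Y$ points into $N_i$. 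Keeping the complex orientation of the base $D_i$ fixed, reversing the total-space orientation reverses the fiber orientation and therefore negates the Euler number. Consequently the cusp cross-section of $Y$, with the orientation induced by the complex orientation of $Y$, is the circle bundle over $T^2$ with Euler number $+n_i > 0$.

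The delicate point, and the step I expect to be the main obstacle, is precisely this orientation bookkeeping: one must fix once and for all the base-first/fiber-first convention, check it against the complex orientation of $X$ restricted to $N_i$, and verify that the passage from $\partial N_i$ to $-\partial N_i$ leaves the base orientation untouched while flipping the fiber orientation. An error at this stage would invert the sign of the Euler number and thus the entire conclusion. I would therefore carry out this computation explicitly rather than by appeal to a general principle, pinning down that the positivity $n_i > 0$ (equivalently, the negativity of $D_i^2$, which holds for toroidal compactifications by~\cite{DiC12}) is what delivers the positive Euler number.

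For the general infra-nilmanifold case, I would invoke Selberg's lemma and residual finiteness of lattices in $\PU(2,1)$ to produce a finite cover $Y' \to Y$ of complex-hyperbolic surfaces whose cusp cross-sections are genuine nilmanifolds. This cover is holomorphic, hence orientation-preserving, and it restricts to finite covers on boundary components compatibly with the orientations induced by the complex orientations. Applying the nilmanifold case to $Y'$, each cusp cross-section of $Y'$ is a circle bundle over a torus with positive Euler number, and these finitely cover the infra-nilmanifold cusp cross-sections of $Y$ in an orientation-preserving fashion. This establishes that each boundary component of $Y$ is finitely covered by a circle bundle over a torus with positive Euler number, as claimed.
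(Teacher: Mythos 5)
Your proposal is correct and follows essentially the same route as the paper: pass to a finite cover with genuine nilmanifold cusps, use the smooth toroidal compactification $(X,D)$, and read the positive Euler number off the negativity of $D_i^2=-n_i$ after the orientation reversal from $\partial N_i$ to the boundary of the truncated surface. You simply make explicit the orientation bookkeeping that the paper's (terser) proof leaves implicit in its appeal to Equation~\eqref{bsignature}.
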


\begin{proof}
	It is well-known that any complex-hyperbolic surface is finitely covered by a complex-hyperbolic surface with nilmanifold cusps, see for example the introduction in~\cite{DiC12}. For such surfaces, we have a well defined smooth toroidal compactification where we can apply~\eqref{bsignature}. In particular,~\eqref{bsignature} implies that, with respect to the complex orientation, the torus-like boundaries are circle bundles over tori with positive Euler number. The proof is complete.
\end{proof}

Before proving that Inequality~\eqref{TI} is sharp, we briefly study higher graph $4$-manifolds whose vertices are complex-hyperbolic surfaces that admit smooth toroidal compactifications birational to Abelian and bielliptic surfaces. As pioneered by Hirzebruch~\cite{Hir84}, and later extended by Holzapfel~\cite{Hol80}, the first author and Stover~\cite{DS17, DS19}, and the authors~\cite{DG23}, there are a lot of explicit constructions of smooth toroidal compactifications of complex-hyperbolic surfaces that are birational to Abelian or bielliptic surfaces.  The goal of this discussion is to show that the signature higher graph $4$-manifolds produced by gluing these of these particular complex-hyperbolic surfaces is always zero. Some explicit examples of such higher graph $4$-manifolds with zero signature appeared in~\cite[Section 6.4]{DH21}.

\begin{proposition}\label{holzapfel}
	Let $Z$ be a closed, oriented, higher graph $4$-manifolds whose vertices are truncated complex-hyperbolic surfaces with torus-like cusps whose toroidal compactifications are birational to Abelian or bielliptic surfaces. We then have
	\[
	\sigma(Z)=0.
	\]
\end{proposition}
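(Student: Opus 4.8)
The plan is to compute $\sigma(Z)$ by Novikov additivity and exhibit an exact pairwise cancellation across the gluing locus, in the same spirit as the proof of Theorem~\ref{Treal}, but now driven by the special numerics of Abelian and bielliptic surfaces. Write $Z=\bigcup_\ell Y_\ell$ as a union of truncated complex-hyperbolic pieces. Each $Y_\ell$ inherits from $Z$ either its complex orientation or the opposite one, which I record by a sign $\epsilon_\ell\in\{\pm1\}$. Since the signature reverses under orientation reversal, Novikov additivity gives
\[
\sigma(Z)=\sum_\ell \epsilon_\ell\,\sigma(Y_\ell),
\]
where $\sigma(Y_\ell)$ is the signature of the complex-oriented truncated piece, computed by the boxed formula~\eqref{bsignature}.

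The key numerical input is that a smooth toroidal compactification $(X_\ell,D_\ell)$ birational to an Abelian or bielliptic surface satisfies $\sigma(X_\ell)=-\chi(X_\ell)$. Indeed, Abelian and bielliptic surfaces are minimal with $\chi=\sigma=0$, so $X_\ell$ is a blow-up of such a surface, and each blow-up raises $\chi$ by $1$ while lowering $\sigma$ by $1$. Combining $\sigma(X_\ell)=-\chi(X_\ell)=-\chi(Y_\ell)$ with the relation $3\sigma(X_\ell)=\chi(Y_\ell)+D_\ell^2$ derived above yields $D_\ell^2=-4\chi(Y_\ell)$, that is $\chi(Y_\ell)=\tfrac14\sum_i n_{i,\ell}$ where $D_{i,\ell}^2=-n_{i,\ell}$. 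Substituting this back into~\eqref{bsignature} collapses the signature of each piece into a sum of purely per-cusp contributions,
\[
\sigma(Y_\ell)=\sum_{i=1}^{b_\ell}\Big(1-\frac{n_{i,\ell}}{4}\Big),
\]
where $b_\ell$ is the number of cusps of $Y_\ell$.

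Consequently $\sigma(Z)=\sum_{c}\epsilon_{\ell(c)}\big(1-\tfrac{n_c}{4}\big)$, a sum over all cusps $c$ of $Z$. Since $Z$ is closed, the cusps are glued in pairs, and it remains to check that the two members of each pair contribute with opposite sign. By Lemma~\ref{orientation}, in the complex orientation every cusp boundary is finitely covered by a circle bundle over a torus with positive Euler number, and reversing the orientation of the piece negates this Euler number; thus the boundary component of $Y_\ell$ carries induced Euler number $\epsilon_\ell n$. The gluing diffeomorphism is orientation-reversing on the boundary, while a circle bundle over a torus with nonzero Euler number admits no orientation-reversing self-diffeomorphism (its oriented diffeomorphism type remembers the sign of the Euler number). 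Hence two glued cusps carry the same unsigned Euler number $n_c$ but opposite orientation signs, so each pair contributes $(\epsilon_1+\epsilon_2)\big(1-\tfrac{n_c}{4}\big)=0$, and therefore $\sigma(Z)=0$.

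The main obstacle, and the only delicate point, is the orientation bookkeeping in the final step: one must argue rigorously that the gluing forces the two sides of each edge to receive opposite signs $\epsilon$, which is exactly where Lemma~\ref{orientation} and the rigidity of circle bundles over tori are needed. I emphasize that this pairwise cancellation relies decisively on the identity $\sigma(X_\ell)=-\chi(X_\ell)$: for a general ball-quotient vertex $\sigma(Y_\ell)$ is not a sum of per-cusp terms, and one recovers only the strict inequality of Theorem~\ref{Tcomplex}. The Abelian/bielliptic hypothesis is used precisely to reduce $\sigma(Z)$ to a cusp-by-cusp count that cancels.
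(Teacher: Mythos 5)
Your proof is correct and follows essentially the same route as the paper: derive $\sigma(X_\ell)=-\chi(X_\ell)$ from the Abelian/bielliptic hypothesis, reduce $\sigma(Y_\ell)$ to the per-cusp sum $\sum_i\bigl(1-\tfrac{n_i}{4}\bigr)$ via the boxed formula~\eqref{bsignature}, and cancel the contributions over glued pairs of cusps. You in fact justify the pairwise cancellation more explicitly than the paper does --- invoking Lemma~\ref{orientation} and the absence of orientation-reversing self-diffeomorphisms of nontrivial circle bundles over tori to force opposite signs $\epsilon$ on the two sides of each edge --- whereas the paper simply asserts the two double sums cancel and defers that orientation argument to the proof of Theorem~\ref{Tcomplex}.
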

\begin{proof}
	If $(X, D)$ is a smooth toroidal compactification birational to an Abelian or bielliptic surface, we then have
	\[
	\chi(X)=-\sigma(X).
	\]
	Thus, in this particular case we obtain
	\[
	\sigma(X)=\frac{D^2}{4}, \quad \sigma(Y)=-\sum^{b}_{i=1}\Big(\frac{n_{i}}{4}-1\Big),
	\]
	where $Y$ is the complex-hyperbolic surface with $b\geq 1$ cusps that we are compactifying. Next, by assembling $Z$ with a combination of such pieces with both orientations in order to get an orientable $Z$, we then obtain, in the same notation as above,
	\[
	\sigma(Z)=-\sum^N_{j=1}\sum^{b_j}_{i=1}\Big(\frac{n_{ij}}{4}-1\Big)+\sum^M_{k=1}\sum^{b_k}_{s=1}\Big(\frac{n_{sk}}{4}-1\Big)=0.\qedhere
	\]
\end{proof}

\vspace{0.2in}
For a closed, oriented, $4$-manifold we have
\[
\chi \equiv \sigma \pmod 2.
\]
Thus, a higher graph $4$-manifolds manufactured by assembling complex-hyperbolic surfaces with Abelian and bielliptic compactifications always has even Euler number. Since there are examples of such ball quotient compactifications with Euler number one~\cite{Hir84, DS17}, we can easily use these atomic components to construct closed, oriented, higher graph $4$-manifolds of any positive even Euler number and zero signature.

This circle of ideas lead us to look for closed, oriented, higher graph $4$-manifold $Z$ whose vertices are truncated complex-hyperbolic surfaces with torus-like cusp and for which $\sigma(Z)\neq 0$. To construct such examples, we use a construction of Hirzebruch~\cite{Hir84}. More precisely, recall that Hirzebruch constructs, among other things, a sequence of smooth toroidal compactifications $X_n$, $n=2, 3, \dots$ that are minimal and of general type. Moreover, he computes that
\[
\chi(X_n)=n^7, \quad \sigma(X_n)=\frac{n^5(n^2-4)}{3}.
\]
The number of cusps of the associated complex-hyperbolic surface with cusp $Y_n$ is given by
\[
l_n=4 n^4.
\]
Each of the associated elliptic curve has self-intersection $-n$. Thus, we can take $X_2$ that has $\chi(X_2)=128$, and $l_2=64$, and fill its cups with 16 copies of the $H_2$ with the reverse orientation. Here, for $n\geq 2$, we denote by $H_{n}$ the complex-hyperbolic surface with four torus-like cusps with Euler number $n$ and such that $\chi(H_{n})=n$ constructed in Section 4.2 of \cite{DG23}. We then denote by $\bar{H}_n$ the reversed-oriented surface. By applying the previous general formula we obtain that
\[
\sigma(Z_2)=\sigma\Big(Y_2\cup \bigcup^{16}_{k=1}\bar{H}_2\Big) = \frac{1}{3}\big(128-32\big)=32.
\]
More generally, since $l_n$ is always divisible by 4, we can more generally construct $Z_n$ for any $n$. More precisely, we have
\[
\sigma(Z_n)=\sigma\Big(Y_n\cup \bigcup^{n^4}_{k=1}\bar{H}_n\Big) = \frac{1}{3}\big(n^7-n^5), \quad \chi(Z_n)=n^7+n^5.
\]
Notice that as in Hirzebruch's examples one has
\[
\boxed{\lim_{n\to\infty}\frac{\sigma(Z_n)}{\chi(Z_n)}=\frac{1}{3}}.
\]

We can summarize this discussion into a proposition.

\begin{proposition}\label{sharpness}
	For each integer $n\geq 2$, there exists a closed, oriented, higher graph $4$-manifold $Z_n$ such that
	\[
	\chi(Z_{n})=n^7+n^5, \quad \sigma(Z_{n})=\frac{1}{3}\big(n^7-n^5).
	\]
\end{proposition}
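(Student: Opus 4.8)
The plan is to realize $Z_n$ as a higher graph $4$-manifold in the sense of~\cite{CSS19}, built by gluing Hirzebruch's compactifiable complex-hyperbolic surface $Y_n$ to several copies of the building block $H_n$ from~\cite{DG23}, and then to read off $\chi$ and $\sigma$ from the additivity formulas already established. First I would record the numerical data of the two kinds of pieces. The truncated surface $Y_n$ underlying Hirzebruch's $X_n$ has $l_n = 4n^4$ cusps, each compactified by an elliptic curve of self-intersection $-n$, hence each a circle bundle over $T^2$ with Euler number $n$; moreover $\chi(Y_n) = \chi(X_n) = n^7$ by Mayer--Vietoris, and $\sigma(Y_n) = \sigma(X_n) + l_n$ by~\eqref{bsignature}. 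The block $H_n$ has $\chi(H_n) = n$ and exactly four cusps, again circle bundles over $T^2$ with Euler number $n$. By Lemma~\ref{orientation}, with respect to the complex orientation all these boundary components carry \emph{positive} Euler number.

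Next I would set up the gluing. I equip $Y_n$ with its complex orientation and take $n^4$ copies of $\bar{H}_n$, the orientation-reversal of $H_n$, contributing $4n^4$ boundary components in total---exactly matching the $4n^4$ cusps of $Y_n$. Since all cusps are diffeomorphic circle bundles over $T^2$ with Euler number of absolute value $n$, and since reversing orientation flips the sign of the Euler number, each boundary of $Y_n$ (Euler number $+n$) can be paired by an affine diffeomorphism with a boundary of some $\bar{H}_n$ (Euler number $-n$) so that the two induced boundary orientations are opposite. This produces a closed, oriented, connected higher graph $4$-manifold $Z_n$; connectedness is automatic because each $\bar{H}_n$ is glued directly to the central piece $Y_n$.

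Finally I would compute the invariants. For the Euler characteristic, additivity over the decomposition together with the vanishing of $\chi$ on the $3$-dimensional gluing loci gives $\chi(Z_n) = \chi(Y_n) + n^4\,\chi(H_n) = n^7 + n^5$. For the signature, Novikov additivity combined with the cancellation of the $\eta$-invariants along paired boundaries---exactly the mechanism used in Proposition~\ref{Pchiandsigma}---gives $\sigma(Z_n) = \sigma(Y_n) + n^4\,\sigma(\bar{H}_n)$. Substituting $\sigma(Y_n) = \sigma(X_n) + l_n = \tfrac{n^7 - 4n^5}{3} + 4n^4$ and, from~\eqref{bsignature}, $\sigma(\bar{H}_n) = -\sigma(H_n) = n - 4$, one finds
\[
\sigma(Z_n) = \frac{n^7 - 4n^5}{3} + 4n^4 + n^4(n-4) = \frac{n^7 - n^5}{3},
\]
as required.

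I expect the genuinely delicate point to be the gluing step rather than the arithmetic. One must verify that the cusp cross-sections of $Y_n$ and of the $\bar{H}_n$ really are mutually diffeomorphic (as the relevant nilmanifolds), and that the complex orientations on all pieces can be simultaneously arranged---via the orientation reversal on the $\bar{H}_n$ copies---so that every gluing matches boundaries with opposite induced orientations. This single condition is what simultaneously guarantees orientability of $Z_n$ and the cancellation of the boundary $\eta$-invariants that makes the signature additive; everything else then reduces to plugging the tabulated data into the formulas above.
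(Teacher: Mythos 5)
Your proposal is correct and follows essentially the same route as the paper: glue Hirzebruch's truncated $Y_n$ (with its $4n^4$ cusps of Euler number $n$) to $n^4$ copies of the orientation-reversed block $\bar{H}_n$, then apply the additivity of $\chi$ and Novikov additivity together with the signature formula~\eqref{bsignature}. Your arithmetic matches the paper's, and you are in fact slightly more explicit than the paper about the orientation-matching needed for the gluing.
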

	
We can now provide all of the details for the proof of Theorem~\ref{Tcomplex}.
	
\begin{proof}[Proof of Theorem~\ref{Tcomplex}]
As mentioned in Remark~\ref{rmk:chi=3sigma}, equality in Equation~\eqref{TI} can only be attained if all complex-hyperbolic pieces have the complex-induced orientation. However, under the assumptions of Theorem~\ref{TI} and thanks to Lemma~\ref{orientation}, with this orientation all the complex pieces have boundary components that are finitely covered by circle bundles over the torus with positive Euler number.

In particular, these are Seifert fibered $3$-manifolds~\cite[Section~4]{Scott} that admit a unique Seifert fibration~\cite[Theorem~3.8]{Scott}. This Seifert fibration has positive Euler number, since it is finally covered by an $S^1$-bundle over $T^2$ with positive Euler number, and the sign of the Euler number is preserved under covers~\cite[Theorem~3.6]{Scott}. In particular, no two of these $3$-manifolds are orientation-reversing diffeomorphic, so that every non-geometric higher graph 4-manifold has to have pieces with both orientations, and therefore the inequality has to be strict. (More precisely, two consecutive vertices of a higher graph 4-manifold have to have complex and anti-complex orientation.)

The sharpness of such inequality is then a consequence of Proposition~\ref{sharpness}. Similarly, the examples of higher graph $4$-manifolds with non-zero signature come from the explicit family of closed manifolds constructed in Proposition~\ref{sharpness}.
\end{proof}

\section{New examples of ball quotient compactifications of general type}

In this section, we look at a class of examples stemming from our investigation. More precisely, we refine a classical construction of Hirzebruch to give a new infinite family of complex-hyperbolic surfaces with toroidal compactifications that are minimal and of general type.

More precisely, recall that Hirzebruch~\cite{Hir84} constructed a family $\{X_n\}_{n > 1}$ of complex surfaces of general type that asymptotically approach the Bogomolov--Miyaoka--Yau line, i.e., $c_1^2(X_n)/c_2(X_n)$ tends to $3$ as $n$ goes to infinity. Each $X_n$ is constructed as an Abelian $n^3$-fold branched cover of a surface $Y_n$, where $Y_n$ is the blow-up of an Abelian surface $E \times E$ at its $n$-torsion points. Calling $C_n$ the cyclic group of order $n$, we can view this construction as the quotient of an action of $C_n^4/C_n \cong C_n^3$ on $X_n$ (here $C_n$ sits diagonally inside $C_n^4$).

When $n = 2m$ is even, we consider the index-$8$ subgroup $G = C_m^3 < C_n^3$ and the corresponding action of $G$ on $X_n$. The quotient, that we call $Z_n$, is an $8$-fold cover of $Y_n$ with the same branching set.

\begin{proposition}
$Z_n$ is a minimal surface of general type. In $Z_n$, the preimages of the $(-1)$-curves in $Y_n$ are non-singular curves of genus $1$ which saturate the logarithmic Bogomolov--Miyaoka--Yau inequality. In particular, their complement is a ball quotient.
\end{proposition}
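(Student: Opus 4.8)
The plan is to realize $(Z_n,D_Z)$, where $D_Z$ is the reduced preimage in $Z_n$ of the exceptional $(-1)$-curves of the blow-down $Y_n\to E\times E$, as the toroidal compactification of a ball quotient, and to deduce the remaining assertions from this. The cleanest route to the ball-quotient conclusion is to exploit the tower $X_n\to Z_n\to Y_n$ directly, rather than to verify $\bar{c}_1^2=3\bar{c}_2$ by hand. Hirzebruch's theorem says precisely that $U_X:=X_n\setminus D_X$ is a ball quotient $\IB^2/\Gamma_X$ (with $\IB^2$ the complex $2$-ball and $\Gamma_X$ a torsion-free lattice), where $D_X$ is a disjoint union of smooth elliptic curves of negative self-intersection. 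Since $G=C_m^3$ is a subgroup of the Galois group $C_n^3$ of $X_n\to Y_n$, it acts on $X_n$ by biholomorphisms preserving $D_X$; because $X_n\to Y_n$ factors through $Z_n$, the exceptional divisor pulls back compatibly, so $Z_n\setminus D_Z$ is canonically $U_X/G$. Every biholomorphism of a ball quotient is induced by an element of $\PU(2,1)$, so the $G$-action lifts to $\IB^2$ and produces a lattice $\Gamma_Z\supset\Gamma_X$ with $\Gamma_Z/\Gamma_X\cong G$; hence
\[
Z_n\setminus D_Z \;=\; U_X/G \;=\; \IB^2/\Gamma_Z
\]
is a ball quotient. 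The fixed loci of $G$ all lie over the branch arrangement, so $\Gamma_Z$ carries complex reflections along the images $\bar G$ of that arrangement (cone angle $2\pi/m$); thus $\Gamma_Z$ is in general a non-uniform lattice with torsion whose orbifold locus $\bar G$ is interior, while the compactifying boundary $D_Z$ remains a union of smooth elliptic curves.

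With the ball-quotient structure in hand, the saturation of the inequality is formal: Hirzebruch--Mumford proportionality for $\IB^2/\Gamma_Z$ (as around~\eqref{bsignature} and~\cite{Mum77}) gives $\bar{c}_1^2=3\bar{c}_2$ for the orbifold log Chern numbers of the pair $(Z_n,D_Z+\bar G)$; equivalently, the $8$-fold cover $Z_n\to Y_n$ scales both orbifold invariants of the base by the same factor. This is the sense in which $D_Z$ ``saturates'' the logarithmic Bogomolov--Miyaoka--Yau inequality, the cusps being compactified by smooth elliptic curves with the self-intersections dictated by~\eqref{bsignature}.

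It remains to verify that $(Z_n,D_Z)$ is a smooth pair of the advertised shape, which I would do by a local analysis over the $n$-torsion points. Away from the arrangement, $Z_n\to Y_n$ is the \'etale $C_2^3$-cover obtained from the surjection $C_n^3\to C_n^3/C_m^3\cong C_2^3$, so each branch curve carries inertia of order $2$; over a transverse crossing of two branch curves the local model is $(z,w)\mapsto(z^2,w^2)$, and over a crossing of a branch curve with an exceptional curve it is $(z,w)\mapsto(z^2,w)$, both with smooth total space, so $Z_n$ is smooth. The same picture presents the preimage of each exceptional $\IP^1$ as a cover of $\IP^1$ branched with order $2$ exactly at its intersections with the arrangement; Hirzebruch's configuration arranges for four such intersection points, so Riemann--Hurwitz gives genus $1$ on every component of $D_Z$, and distinct components are disjoint because the exceptional curves are. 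Finally $K_{Z_n}+D_Z$ is big, since the pair is of log-general type with $D_Z$ a union of negative elliptic curves, so $Z_n$ is of general type; minimality follows as in Hirzebruch's analysis of $X_n$~\cite{Hir84}, a $(-1)$-curve being rational and thus unable to sit in the hyperbolic interior $\IB^2/\Gamma_Z$, with the remaining intersection patterns with $D_Z$ excluded directly.

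The hard part will be the local bookkeeping at the torsion points: one must determine how the index-$8$ subgroup $G$ meets the cyclic inertia groups $\langle\gamma_i\rangle\cong C_n$ of Hirzebruch's four defining curves, since this single input controls (i) the smoothness of $Z_n$, (ii) the count of four intersection points that forces each lifted exceptional curve to be a genus-$1$ curve rather than a rational or higher-genus one, (iii) the precise torsion of $\Gamma_Z$ and hence whether the saturated inequality is read off the smooth pair $(Z_n,D_Z)$ or the orbifold pair $(Z_n,D_Z+\bar G)$, and (iv) the absence of $(-1)$-curves. Once these data are tabulated from the multiplicities of the $n$-torsion points on the four defining curves and the reduction $C_n^3\to C_2^3$, the smoothness, the genus computation, minimality, and the lifting argument all go through, giving the proposition.
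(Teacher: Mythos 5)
Your strategy has a genuine flaw at its core: you have misidentified which divisor's complement Hirzebruch's theorem exhibits as a ball quotient. For Hirzebruch's $X_n$, the ball quotient is $X_n$ minus the preimage of the \emph{branch arrangement} (the $4n^4$ elliptic curves of self-intersection $-n$ lying over the proper transforms of the four elliptic curves in $E\times E$), not the complement of the preimages of the exceptional $(-1)$-curves. Consequently, passing to the quotient by $G=C_m^3$ (which does act freely off the ramification locus and does descend the hyperbolic metric) only shows that $Z_n\setminus R$ is a ball quotient, where $R$ is the reduced ramification divisor in $Z_n$ --- a statement that is essentially immediate from Hirzebruch but is \emph{not} the one claimed. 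The proposition concerns $Z_n\setminus L$, where $L$ is the preimage of the exceptional curves; here $R$ lies in the \emph{interior} and $L$ is removed. These are genuinely different complex-hyperbolic structures (the remark following the proposition stresses exactly this for $n=2$: $X_2=Z_2$ compactifies two distinct ball quotients with different numbers of cusps), and neither can be obtained from the other by a group quotient or by equivariance of Hirzebruch's uniformization. This is why the paper cannot argue as you do: it must compute $c_2(Z_n)=8n^4$, $K_{Z_n}=L+R$ via the ramification formula, $c_1^2(Z_n)=16n^4$, and $L^2=-8n^4$ directly, verify that each component of $L$ is a \emph{connected} smooth elliptic curve of self-intersection $-8$ (connectedness requires the surjectivity of $H_1(E_i\setminus B)\to C_2^3$, which you correctly flag as undone ``bookkeeping''), and then invoke the Tian--Yau logarithmic Bogomolov--Miyaoka--Yau characterization \cite{TY86} to conclude that $Z_n\setminus L$ is a ball quotient. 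Your appeal to proportionality is also circular in your framework: proportionality is a consequence of being a ball quotient, whereas here the saturation of the log-BMY inequality is the \emph{hypothesis} used to produce the ball quotient structure.

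Two smaller points. First, your local models at ``transverse crossings of two branch curves'' do not occur: the whole purpose of blowing up the $n$-torsion points is that the proper transforms of the four elliptic curves become pairwise disjoint in $Y_n$, so the branch divisor $B$ is a disjoint union of smooth tori and smoothness of $Z_n$ is easier than you suggest. Second, your Riemann--Hurwitz count gives Euler characteristic zero for the full preimage of each $E_i$ but does not by itself rule out disconnectedness, which would change the component structure of $L$ and hence the verification of the hypotheses of \cite{TY86}; this is precisely the gap the paper closes with the $H_1$ surjectivity argument.
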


\begin{proof}
We compute the characteristic numbers of $Z_n$. We begin with $\chi(Z_n) = c_2(Z_n)$, the topological Euler characteristic/second Chern class. Recall that $Y_n$ is the blow-up of an Abelian surface $A$ at $n^4$ points, so $\chi(Y_n) = n^4$. Since $Z_n$ is an $8$-fold cover of $Y_n$, ramified over tori, then $\chi(Z_n) = 8\chi(Y_n) = 8n^4$. We now want to compute $c_1(Z_n) = -K_{Z_n}$. In order to do that, we look a bit closer at $Y_n$ and at the projection $Z_n \to Y_n$. The ramification locus inside $Y_n$ is the union of $4n^2$ tori of self-intersection $-n^2$. Call $B$ the reduced divisor in comprising all these tori $Y_n$, and $R$ its (reduced) preimage in $Z_n$. Since $Z_n \to Y_n$ is of degree 8 and is ramified of degree $2$ at each point of $R$, we have that each component of $R$ has self-intersection $-n^2/2$, and that each component of $B$ lifts to $4$ components of $R$.

The canonical class $K_{Y_n}$ is represented by the divisor $\sum E_i$, where $E_1, \dots, E_{n^4}$ are the $n^4$ exceptional divisors of the blow-ups $Y_n \to A$.
By construction, each component of $E_i$ intersects four components of $B$ transversely at distinct points.
We call $L$ the preimage of the union of all the $E_i$'s in $Z_n$. We claim that each component $E_i$ of $E$ lifts to a component $L_i$ of $L$, and that each of these is a torus of self-intersection $-8$.


First, we show that the preimage of $E_i$ is connected. $E_i\setminus B$ is a four-punctured sphere, so $H_1(E_i \setminus B) = \IZ^4/\IZ$, generated by the four meridians of the components of $B$ meeting $E_i$.
Recall from~\cite{Hir84} that the cover $Z_n \to Y_n$ is associated to a morphism $H_1(Y_n \setminus B) \to C_n^4/C_n$ that is surjective when restricted to the subspace of $H_1(Y_n \setminus B)$.
Since the inclusion $E_i \setminus B \hookrightarrow Y_n\setminus B$ induces a map on homology whose image is exactly the subspace generated by the meridians of $B$, the restriction of the cover $Z_n \setminus R \to Y_n \setminus B$ to $E_i\setminus B$ is onto. It follows that the induced cover is connected, so the preimage of $E_i$ in $Z_n$ is connected.

By the Riemann--Hurwitz formula, the Euler characteristic of $L_i$ is $0$, so it is indeed a torus. Since $E_i$ has self-intersection $-1$ and $Z_n \to Y_n$ has degree $8$ and does not ramify over $E_i$, then $L_i\cdot L_i = -8$. Moreover, each intersection point of $B$ with $E$ lifts to four points of intersections of $R$ with $L$ (since the ramification index at each point of $R$ is $2$ and the degree of the cover is $8$), so that $R\cdot L = 16n^4$.

By the ramification formula, since $Z_n \to Y_n$ has index $2$ on each component of its ramification divisor $R$, we have that $K_{Z_n} = L+R$. Since both $L$ and $R$ are genus-1 divisors of negative self-intersection, $K_{Z_n}$ is free of exceptional divisors and therefore $Z_n$ is minimal. We now have
\[
c_1(Z_n)^2 = K_{Z_n}^2 = L^2 + 2R\cdot L + R^2 = -8\cdot n^4 + 2\cdot 16n^4 + -\frac{n^2}2 \cdot 16n^2  = 16n^4.
\]
Since $c_1(Z_n)^2$ is positive for each $n$, $Z_n$ is of general type. We have also shown that $L$ contains $n^4$ tori of self-intersection $-8$, so that
\[
-L^2 = -8n^4 = 3\cdot 8n^4 - 16n^4 = 3c_2(Z_n) - c_1(Z_n)^2.
\]
It follows that $(Z_n, L)$ saturates the logarithmic Bogomolov--Miyaoka--Yau inequality \cite{TY86}, and $Z_n \setminus L$ is a ball quotient.
\end{proof}

\begin{remark}
When $n=2$, $Z_2 = X_2$, and the previous proposition shows that $X_2$ is the toroidal compactification of two distinct ball quotients. One of them is the complement of the ramification divisor (which consists of $64$ tori of self-intersection $-2$), and the other is the complement of the preimages of the exceptional divisor (which consists of sixteen tori of self-intersection $-8$), as in the statement of the proposition above. The two ball quotients are distinct, since they have a different number of cusps. An interesting thing to point about $X_2$ is that it is \emph{minimal} and \emph{of general type}. In~\cite{DS16}, the first author and Stover constructed \emph{non-minimal} and \emph{non-general} type surfaces that are simultaneously the toroidal compactifications of multiple ball quotients. They are blow-ups of Abelian surfaces.
\end{remark}


\bibliographystyle{amsalpha}
\bibliography{sigma_vs_chi}

\end{document}